\newcommand{\rd}{\,\mathrm{d}}
\numberwithin{equation}{section}
\newtheorem{theorem}{Theorem}[section]
\newtheorem{lemma}[theorem]{Lemma}
\newtheorem{corollary}[theorem]{Corollary}
\newtheorem{definition}[theorem]{Definition}
\newtheorem{remark}[theorem]{Remark}
\def\bu{{\bf u}}
\def\bx{{\bf x}}
\def\by{{\bf y}}
\def\cE{\mathcal{E}}
\def\supp{\textnormal{supp\,}}
\def\dist{\textnormal{dist\,}}
\begin{document}
\title[Pressureless Euler-Poisson equations with quadratic confinement]{Existence of radial global smooth solutions to the pressureless Euler-Poisson equations with quadratic confinement}

\author{Jos\'e A. Carrillo$^{\dagger}$, Ruiwen Shu$^{\ddag}$}
\date{\today}
\subjclass[2020]{}
\address[$\dagger$]{Mathematical Institute, University of Oxford, Oxford OX2 6GG, UK. Email: {\tt carrillo@maths.ox.ac.uk}}
\address[$\ddag$]{Department of Mathematics,
University of Georgia,
Athens, GA 30602, USA.
Email: {\tt ruiwen.shu@uga.edu}}

\maketitle

\begin{abstract}
    We consider the pressureless Euler-Poisson equations with quadratic confinement. For spatial dimension $d\ge 2,\,d\ne 4$, we give a necessary and sufficient condition for the existence of radial global smooth solutions, which is formulated explicitly in terms of the initial data. This condition appears to be much more restrictive than the critical-threshold conditions commonly seen in the study of Euler-type equations. To obtain our results, the key observation is that every characteristic satisfies a periodic ODE system, and the existence of global smooth solution requires the period of every characteristic to be identical.
\end{abstract}

\section{Introduction}

In this work, we will deal with the pressureless Euler-Poisson equations with confinement written as
\begin{equation}\label{eq}\left\{\begin{split}
& \partial_t \rho + \nabla\cdot (\rho \bu) = 0 \\
& \partial_t \bu + \bu\cdot\nabla \bu = - \int \nabla N(\bx-\by)\rho(t,\by)\rd{\by} - \bx \\
\end{split}\right.\,.\end{equation}
Here $\bx\in\mathbb{R}^d,\,d\ge 2$, $\rho(t,\bx)$ is the particle density function, and $\bu(t,\bx)$ is the velocity field. $N$ is the Newtonian repulsion potential, satisfying $-\Delta N = \delta$, given by
\begin{equation}
N(\bx) = \left\{\begin{split}
& -\frac{1}{2\pi}\ln|\bx|,\quad d=2 \\
& c_d|\bx|^{2-d},\quad d\ge 3,\quad c_d = \frac{1}{|S^{d-1}|}
\end{split}\right.\,.
\end{equation}
The last term $-\bx$ in the velocity $\bu$ equation represents the effect of a quadratic confining potential. Notice that this is equivalent to say that the particles are subject to a potential force with the potential being $\phi= (-\Delta)^{-1}(\rho-d)$, i.e., Newtonian repulsion with a positive charged background, see for instance \cite{tan2021eulerian}. Our aim is to give a sharp result on the existence of global smooth solutions to \eqref{eq} for radial initial data.

The existence of global smooth solutions to Euler-Poisson systems has been thoroughly studied in the literature. One popular approach for the study of Eulerian dynamics, which we will adopt in this paper, is \emph{spectral dynamics} \cite{engelberg2001critical,liu2002spectral}. This method was originally designed to analyze the eigenvalues of the deformation matrix $\nabla \bu$ along the characteristics of the flow. It was later generalized to analyze the time evolution of certain quantities along characteristics, and derive the existence of global smooth solutions of the PDE system as that of a family of ODE systems. For the pressureless Euler-Poisson system, some criteria for the existence of global smooth solutions have been developed by \cite{engelberg2001critical,liu2002spectral,LT03,liu2004rotation,bae2012critical,tan2021eulerian} in the context of 1D or multi-D radial solutions. Similar approaches were also developed to study Eulerian dynamics arising from models of collective behavior \cite{tadmor2014critical,carrillo2016critical,CCZ16,he2017global,shvydkoy2017eulerian,do2018global,lear2019existence,CWZ20,shu2020flocking,tan2020euler,tan2021eulerian}, which usually involve other forcing terms like the Cucker-Smale alignment interaction \cite{cucker2007emergent} or linear damping. 

The local-in-time existence and uniqueness of classical solutions to the Euler-Poisson system is known for the initial data being a small perturbation of the stationary state, see \cite{M86,MP90}. In these references, the authors assume that the density is positive on the whole line with zero limit as $x\to \pm\infty$. 
A local-in-time well-posedness of the Cauchy problem for the pressureless Euler-Poisson system in the plane without smallness assumptions in Sobolev spaces was given in \cite[Section 5]{bae2012critical}. Besides the study of the pressureless Euler-Poisson system, people have also studied the existence of global smooth solutions to the Euler-poisson system with pressure \cite{wang1998formation,guo1998smooth,wang2001global,tadmor2008global,yuen2011blowup,jang2012two}. We can summarize by saying that finding sharp criteria for the existence of global smooth solutions is a challenging problem for Euler-Poisson type problems. One of the difficulties we need to face in this work is to deal with initial data that are compactly supported in the density for \eqref{eq}, and thus we need to introduce a suitable notion of solution consistent with  free boundary conditions for the system \eqref{eq}.

\subsection{Radial formulation \& Notion of solution}

As already mentioned, we are concerned with radial solutions to \eqref{eq}, i.e., solutions with $\rho=\rho(t,r)$, $\bu=u(t,r)\frac{\bx}{r}$, where $r=|\bx|$. To reformulate \eqref{eq} into radial variables, we introduce the quantities related to a density $\rho(\bx)=\rho(r)$:
\begin{equation}
P (r) = |S^{d-1}|r^{d-1}\rho(r),\quad m(r) = \int_0^r  P (s)\rd{s} = \int_{|\by|<r} \rho(\by)\rd{\by}.
\end{equation}
Similar notations will be used for time-dependent densities. We  give a lemma on the Newtonian potential generated by a radial density.
\begin{lemma}
Let $\rho(\bx)=\rho(r)$ be compactly supported and $L^\infty$. Then
\begin{equation}\begin{split}
\int N(\bx-\by)\rho(|\by|)\rd{\by} = \int_r^\infty N(s) P (s)\rd{s} + N(r)m(r).
\end{split}\end{equation} 
\end{lemma}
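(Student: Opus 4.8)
This statement is Newton's shell theorem integrated against the radial profile of $\rho$, so the plan is to slice the density into spherical shells and handle each shell by the classical potential computation. Since $N\in L^1_{\mathrm{loc}}$ and $\rho$ is bounded with compact support, the double integral converges absolutely, and writing the $\by$-integration in polar form gives
\[
\int N(\bx-\by)\rho(|\by|)\rd\by=\int_0^\infty \rho(s)\,\psi_s(\bx)\rd s,\qquad \psi_s(\bx):=\int_{|\by|=s}N(\bx-\by)\rd\sigma(\by),
\]
the potential of the uniform sphere of radius $s$. The core of the argument is to identify $\psi_s$ explicitly: since $N$ is harmonic and radial on $\mathbb R^d\setminus\{0\}$, differentiation under the integral shows $\psi_s$ is harmonic and radial on $\{|\bx|\ne s\}$, hence on each of the two regions an affine combination of the radial harmonics ($1$ and $r^{2-d}$, or $1$ and $\ln r$ when $d=2$). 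On $\{|\bx|<s\}$ it is bounded near $0$, so it is constant, equal to $\psi_s(0)=\int_{|\by|=s}N(\by)\rd\sigma(\by)=|S^{d-1}|s^{d-1}N(s)$; on $\{|\bx|>s\}$ the coefficients are fixed by continuity across $|\bx|=s$ together with the far-field behaviour (the shell looks like a point charge of mass $|S^{d-1}|s^{d-1}$), or equivalently by Gauss's theorem on a sphere of radius $>s$, yielding $\psi_s(\bx)=|S^{d-1}|s^{d-1}N(|\bx|)$.

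Substituting back, splitting $\int_0^\infty=\int_0^r+\int_r^\infty$ with $r=|\bx|$, and using $P(s)=|S^{d-1}|s^{d-1}\rho(s)$ together with $m(r)=\int_0^r P$, one obtains
\[
\int N(\bx-\by)\rho(|\by|)\rd\by=N(r)\int_0^r P(s)\rd s+\int_r^\infty N(s)P(s)\rd s=N(r)m(r)+\int_r^\infty N(s)P(s)\rd s,
\]
which is the claim. An alternative, more PDE-flavoured route avoids the shell identity: $\phi:=N*\rho$ is $C^1$ and solves $-\Delta\phi=\rho$ distributionally; by radial symmetry this reads $(r^{d-1}\phi'(r))'=-P(r)/|S^{d-1}|$, and integrating from $0$ (using $r^{d-1}\phi'(r)\to0$ as $r\to0$, since $\phi\in C^1$) gives $\phi'(r)=-m(r)/(|S^{d-1}|r^{d-1})=N'(r)m(r)$, with $N'(r)=-1/(|S^{d-1}|r^{d-1})$ from Gauss's theorem. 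Since the claimed right-hand side has the same $r$-derivative, the two sides differ by a constant, which is then shown to vanish by letting $r\to\infty$ beyond $\supp\rho$.

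The computation is short and I do not anticipate a genuine obstacle; the only points needing care are justifying that $\psi_s$ (equivalently $\phi=N*\rho$) is regular enough for the harmonicity / radial-ODE manipulations even though $\rho$ is merely $L^\infty$, and correctly pinning down the single constant of integration — in particular retaining the logarithmic growth of $N$ at infinity in dimension $d=2$ rather than claiming decay there.
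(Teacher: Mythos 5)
Your proposal is correct and is essentially the paper's argument recast shell-by-shell: the paper splits the integral into $|\by|<r$ and $|\by|>r$ and uses exactly the two facts you prove for $\psi_s$, namely that a radial harmonic potential is constant inside the shell (evaluated at the origin) and that the mean-value property of $N$ gives the value $N(|\bx|)$ when the shell lies inside the point. The only cosmetic difference is that you pin down the exterior shell potential via continuity plus far-field/Gauss matching rather than invoking the mean-value property of $N$ on $B(\bx;s)$ directly.
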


\begin{proof}
Denote $R=|\bx|$. We have
\begin{equation}\begin{split}
\int N(\bx-\by)\rho(|\by|)\rd{\by} = & \int_{|\by|>R}N(\bx-\by)\rho(|\by|)\rd{\by} +  \int_{|\by|<R}N(\bx-\by)\rho(|\by|)\rd{\by}  \\
= & \int_{|\by|>R}N(\by)\rho(|\by|)\rd{\by} + N(\bx)\int_{|\by|<R}\rho(|\by|)\rd{\by}.
\end{split}\end{equation} 
Here we treat the first integral by the fact that $\int_{|\by|>R}N(\cdot-\by)\rho(|\by|)\rd{\by} $ is a radial harmonic function on $B(0;R)$ and continuous on $\mathbb{R}^d$, and thus constant on $\widebar{B(0;R)}$. We use the mean-value property of the harmonic function $N$ on $B(\bx;|\by|)$ in the second integral. Therefore the conclusion is obtained.
\end{proof}

Now we can write radial solutions to \eqref{eq} as 
\begin{equation}\label{eqrad}\left\{\begin{split}
& \partial_t P + \partial_r (P u) = 0 \\
& \partial_t u + u\partial_r u = - \partial_r N(r)m(t,r) - r,\quad m(t,r) = \int_0^r P(t,s)\rd{s}
\end{split}\right. .\end{equation}
We always assume that the radial initial data $(\rho_0,\bu_0)$ of \eqref{eq} satisfies that $\rho_0$ is continuous and compactly supported with $\rho_0\ge 0$, $\rho_0(0)>0$, and $\bu_0$ is $C^1$ on $\supp\rho_0$. As a consequence, the corresponding initial data $(P_0,u_0)$ of \eqref{eqrad} satisfies that 
\begin{itemize}
    \item $P_0$ is $C^1$, compactly supported on $[0,R_0]$ for some $R_0>0$, with  $\lim_{r\rightarrow 0^+}r^{1-d}P_0(r)>0$ and $\partial_r(r^{1-d}P_0(r))|_{r=0}=0$.
    \item $u_0$ is $C^1$ on $[0,R_0]$ with $u_0(0)=0$.
\end{itemize}
The triple $(P_0(r),u_0(r),R_0)$ is said to be \emph{consistent} if the above two conditions are satisfied.

\begin{definition}
A tuple $(P(t,r),u(t,r),R(t))$ is called a \emph{classical bulk solution} to \eqref{eqrad} on $[0,T],T>0$ with the consistent initial data $(P_0,u_0,R_0)$  if
\begin{itemize}
\item $R\in C^1([0,\infty))$. $P$ and $u$ are supported on $\{(t,r):0\le t \le T,\,r\in [0,R(t)]\}$ and $C^1$ on this set. $P$, $u$, $R$ agree with the initial data at $t=0$.
\item \eqref{eqrad} is satisfied in $\{(t,r):0\le t \le T,\,r\in (0,R(t))\}$ in the classical sense.
\item $R(t)$ correctly describes the boundary motion, i.e., $R'(t)=u(t,R(t))$.
\end{itemize}
It is called a \emph{global classical bulk solution} if it is a classical bulk solution on $[0,T]$ for any $T>0$.
\end{definition}

We will provide a continuation criteria for bulk solutions to \eqref{eqrad} in Section 2, see Lemma \ref{lem_crit}. This allows us to analyse the global existence of bulk solutions by characteristic tracing. 

\subsection{Main results}

With these preparations we can state the main contributions of this work. Because of the specificity of the two dimensional Newtonian potential, we separate the general result from the two dimensional case.

\begin{theorem}\label{thm1}
Assume $d\ge 3$, $d\ne 4$. Let $(P_0,u_0,R_0)$ be a consistent initial data. Then there exists a global classical bulk solution to \eqref{eqrad} with this initial data if and only if the following conditions are satisfied:
\begin{itemize}
\item There exists a constant $C_0$ such that 
\begin{equation}\label{thm1_0}
    m_0(r)^{-2/d}\left(\frac{1}{2}u_0(r)^2 + m_0(r) N(r) + \frac{1}{2}r^2\right)=C_0,\quad \forall r\in (0,R_0).
\end{equation}
\item Either $C_0 = \min_{\tilde{r}}\left\{ N(\tilde{r})+\frac{1}{2}\tilde{r}^2 \right\}$ and $(P_0,u_0,R_0)$ is a stationary solution; or $C_0 > \min_{\tilde{r}}\left\{ N(\tilde{r})+\frac{1}{2}\tilde{r}^2 \right\}$ and
\begin{equation}\label{thm1_1}
    \min_{(\tilde{r},\tilde{u})\in \mathcal{K}_r} \left\{\theta(r)\tilde{u} + \frac{P_0(r)}{d m_0(r)} \tilde{r} \right\} > 0,\quad \forall r\in (0,R_0)
\end{equation}
where the minimum is taken over the energy level set 
$$
\mathcal{K}_r:=\left\{(\tilde{r},\tilde{u})\in\mathbb{R}_+\times\mathbb{R}:m_0(r)^{-2/d}\left(\frac{1}{2}\tilde{u}^2 + m_0(r) N(\tilde{r}) +  \frac{1}{2}\tilde{r}^2\right)=C_0\right\},
$$
and $\theta(r)$ is defined by
\begin{equation}\label{theta}
    \theta(r) := \left\{\hspace{-2mm}
    \begin{array}{cl}
\displaystyle  \frac{1-\frac{P_0(r)r}{d m_0(r)}}{u_0(r)}   & \mbox{if } u_0(r)\neq 0 \\[5mm]
\displaystyle      \frac{\frac{1}{d}P_0(r)u_0(r)-m_0(r) \partial_r u_0(r)}{m_0(r)(-c_d m_0(r) (d-2)r^{1-d}+r)}   & \mbox{if } -c_d m_0(r) (d-2)r^{1-d}+r\neq 0 
    \end{array}
     \right. \,.
\end{equation}
Here item 1 and $C_0 > \min_{\tilde{r}}\left\{ N(\tilde{r})+\frac{1}{2}\tilde{r}^2\right\}$ guarantee that at least one of the above fractions have nonzero denominator, and they are equal when both having nonzero denominators.
\end{itemize}
\end{theorem}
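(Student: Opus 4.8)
The plan is to analyze the system via the method of characteristics. For a fixed initial radius $r \in (0,R_0)$, let $X(t;r)$ denote the position at time $t$ of the characteristic starting at $r$, and let $U(t;r) = u(t,X(t;r))$ be its velocity. Since mass is transported, $m(t,X(t;r)) = m_0(r) =: M$ is conserved along the characteristic. Plugging this into \eqref{eqrad}, each characteristic obeys the closed Hamiltonian-type ODE system $\dot X = U$, $\dot U = -M\,\partial_r N(X) - X$. The key observation (as advertised in the abstract) is that this planar autonomous system has the conserved energy
\[
E_r := \frac{1}{2}U^2 + M\, N(X) + \frac{1}{2}X^2,
\]
and since the effective potential $M N(X) + \frac12 X^2$ is strictly convex with a single well on $(0,\infty)$ (using $d \ge 3$ so that $N$ is decreasing and convex), the orbit is a closed curve and the motion is periodic with some period $\mathcal{T}(r)$ depending on the energy level $E_r$ and on $M$. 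First I would make all of this rigorous: establish that characteristics remain in $(0,\infty)$ for all time (no collapse to the origin, using $\rho_0(0)>0$ and the repulsion), derive the ODE system and its first integral, and record the period as a function $\mathcal T(E_r, M)$.

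Next I would extract what global smoothness of the bulk solution forces. The density along a characteristic evolves by $\partial_t P + u \partial_r P = -P\,\partial_r u$, so by tracking $P$ and the Jacobian $\partial_r X(t;r)$ one sees that smoothness persists precisely as long as $\partial_r X(t;r) \ne 0$ for all $r$ (this is essentially the content of the continuation criterion, Lemma \ref{lem_crit}). Now here is the rigidity mechanism: the outermost characteristic $R(t) = X(t;R_0)$ is periodic with period $\mathcal T(E_{R_0}, m_0(R_0))$, and it returns to $R_0$ at that time; but the whole configuration — in particular the map $r \mapsto X(t;r)$ — must be a diffeomorphism of $[0,R_0]$ onto $[0,R(t)]$ at every $t$. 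If some inner characteristic had a different period, the orbits in the $(X,U)$ phase plane starting from different $r$ would wind around their respective energy curves at different angular rates, forcing $\partial_r X$ to vanish at some time (two characteristics crossing, or a fold), contradicting global smoothness. Hence global smoothness requires $\mathcal T(E_r, m_0(r))$ to be independent of $r$ — all characteristics share a common period. I would prove this carefully: the $r$-derivative of the flow satisfies the variational (linearized) equation, and a mismatch of periods produces linear-in-(number of periods) growth of $\partial_r X$ in some direction and a sign change in another, hence a zero.

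Then I would convert the "common period" condition plus "no folding ever" into the explicit conditions \eqref{thm1_0}–\eqref{thm1_1}. Condition \eqref{thm1_0} should emerge as follows: for an autonomous 1D Hamiltonian $H(X) = M N(X) + \tfrac12 X^2$ with $H$ homogeneous-like behaviour, the period at energy $E$ and mass $M$ scales, after the substitution exploiting that $N$ is a power of $|X|$ (here $N(r) = c_d r^{2-d}$), in a way that makes $\mathcal T$ depend only on the dimensionless ratio $E/M^{2/d}$ — this is exactly the quantity $m_0(r)^{-2/d}(\tfrac12 u_0(r)^2 + m_0(r)N(r) + \tfrac12 r^2)$ in \eqref{thm1_0}; requiring it to be the same constant $C_0$ for all $r$ is the common-period condition. (The exclusion $d \ne 4$ presumably enters because at $d=4$ this scaling degenerates or the monotonicity of $\mathcal T$ in $E/M^{2/d}$ fails.) The dichotomy on $C_0$ is the distinction between the bottom of the well (stationary solution) and a genuine oscillation. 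Finally, condition \eqref{thm1_1}: even with all periods equal, one must rule out a fold developing within a single period, i.e., $\partial_r X(t;r) \ne 0$ for $t$ in $(0,\mathcal T)$. Writing the solution of the variational equation in the phase plane in terms of the two natural solutions (the tangent vector to the orbit $(\dot X, \dot U)$ and a transversal one built from $\partial_M$ of the flow), the condition $\partial_r X \ne 0$ throughout becomes the statement that a certain linear combination, with coefficients $\theta(r)$ and $P_0(r)/(d m_0(r))$ coming from differentiating the initial data $(X(0)=r, U(0)=u_0(r), M = m_0(r))$ with respect to $r$, never hits the $X$-axis of the comoving frame — precisely \eqref{thm1_1} after minimizing over the orbit $\mathcal K_r$. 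The two formulas for $\theta(r)$ in \eqref{theta} are the two ways of computing the same transversal coefficient (dividing by $\dot X = u_0$ or by $\dot U = -c_d m_0(d-2)r^{1-d} + r$ at $t=0$), which is why they agree when both denominators are nonzero.

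The main obstacle, I expect, is the rigidity step: turning "some characteristic has a different period" into a rigorous proof that $\partial_r X$ must vanish, uniformly over the two-parameter family, and handling the degenerate/boundary behaviour as $r \to 0^+$ (where $M = m_0(r) \to 0$ but the energy ratio must still match $C_0$) and near the support boundary. Controlling the period function $\mathcal T(E,M)$ — its smoothness, its monotonicity in $E$, and the exact scaling that produces the clean invariant in \eqref{thm1_0} — is the technical heart, and is also where the hypothesis $d \ne 4$ must be used.
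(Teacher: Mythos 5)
Your overall architecture matches the paper's: characteristics carry a conserved mass $m_0(r)$ and obey the planar Hamiltonian system, all orbits are periodic, global smoothness forces a common period (the paper's Lemma \ref{lem_Tconst}), the scaling $N(r)=c_d r^{2-d}$ turns the period into a function of the single invariant $m_0(r)^{-2/d}\mathcal{E}_0(r)$ (Lemma \ref{lem_Tscale}), and the critical threshold \eqref{thm1_1} is exactly the positivity of the Jacobian $f(t)=\theta\, u(t)+\tfrac{P_0(r)}{d m_0(r)}r(t)$ of the characteristic flow over one period, with the two formulas for $\theta$ reconciled by differentiating the energy identity (the paper's \eqref{C0d} and Remark \ref{rem_checkPw}). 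Your identification of $f$ as a combination of the tangent solution and a particular solution of the inhomogeneous variational equation is a correct reading of the same mechanism the paper uses, and your continuation criterion is the paper's Lemma \ref{lem_crit}.

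The genuine gap is the step you defer to at the end: converting ``all characteristics have the same period'' into the energy-level identity \eqref{thm1_0}. Constancy in $r$ of $T\bigl(m_0(r)^{-2/d}\mathcal{E}_0(r);1\bigr)$ implies constancy of $m_0(r)^{-2/d}\mathcal{E}_0(r)$ only if the one-variable period function $E\mapsto T(E;1)$ has no interval on which it is constant; a priori it could have a plateau, and then \eqref{thm1_0} would not be necessary. You guess that ``monotonicity of $\mathcal{T}$ in $E/M^{2/d}$ fails at $d=4$,'' but you give no route to proving monotonicity (or any weaker non-degeneracy) for $d\neq 4$, and monotonicity near the bottom of the well alone would not suffice since the common value of the scaled energy can lie anywhere in $(\mathcal{E}_{\min},\infty)$. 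This is precisely where the paper invests its main technical effort (all of Section \ref{sec_ana}): it proves that $T(E)$ is real analytic on $(0,\infty)$ (Theorem \ref{thm_ana}, via a holomorphic extension of the inverse of the potential on angular sectors), computes the expansion $T'(E)=\pi c_V (V''(X_0))^{-7/2}+O(E^{1/2})$ with $c_V=\tfrac16 d^2(d-1)(d-4)$ (Lemma \ref{lem_Texpan}, Corollary \ref{cor_Vd}), and concludes non-constancy on every subinterval exactly when $d\neq 4$ (Lemma \ref{lem_Tnon}); analyticity is what globalizes the local computation at $E=0$. Without an argument of this type your proof of necessity of \eqref{thm1_0} does not close, and the hypothesis $d\neq 4$ is never actually used. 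A secondary, smaller issue: your rigidity step (period mismatch forces $\partial_r X=0$) is argued by a heuristic ``different winding rates produce growth and a sign change''; the paper instead shows directly that two nearby characteristics with overlapping radial ranges and periods in irrational ratio must intersect (a rational-approximation argument), which is the concrete statement you would still need to supply.
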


It is easy to see that item 1 implies the equivalence of the two fractions in \eqref{theta}. In fact, differentiating the energy level equation  \eqref{thm1_0} with respect to $r$ gives
\begin{equation}
-\frac{2}{d}P_0(r)\Big(\frac{1}{2}u_0(r)^2+m_0(r) N(r) + \frac{1}{2}r^2\Big) + m_0(r)\Big( u_0(r) \partial_r u_0(r) + P_0(r) N(r) + m_0(r) \partial_r N(r) + r\Big)=0.
\end{equation}
Using $N(r) = c_d r^{2-d}$, one can rewrite it as 
\begin{equation}\label{C0d}
    u_0(r)\Big(-\frac{1}{d}P_0(r)u_0(r)+m_0(r) \partial_r u_0(r)\Big) + m_0(r)\Big(1-\frac{P_0(r)r}{d m_0(r)}\Big)\Big(-c_d m_0(r) (d-2)r^{1-d}+r\Big) = 0
\end{equation}
which shows that the two fractions in \eqref{theta} are equal whenever they have nonzero denominators. Also, if both denominators are zero, then one has $u_0(r)=-c_d m_0(r) (d-2)r^{1-d}+r=0$, in which case we will show that the same is true for any $0<r<R_0$ and we necessarily have a stationary solution to \eqref{eqrad}.

\begin{theorem}\label{thm1_2d}
Assume $d=2$. Let $(P_0,u_0,R_0)$ be a consistent initial data. Then there exists a global classical bulk solution to \eqref{eqrad} with this initial data if and only if the following conditions are satisfied:
\begin{itemize}
\item There exists a constant $C_0$ such that \begin{equation}\label{thm1_2d_0}
    m_0(r)^{-1}\Big(\frac{1}{2}u_0(r)^2 + m_0(r) N(r) + \frac{1}{2}r^2\Big) = C_0-\frac{1}{4\pi}\ln m_0(r),\quad \forall r\in (0,R_0).
\end{equation}
\item Either $C_0 = \min_{\tilde{r}} \{N(\tilde{r})+\frac{1}{2}\tilde{r}^2\}$ and $(P_0,u_0,R_0)$ is a stationary solution; or $C_0 > \min_{\tilde{r}} \{N(\tilde{r})+\frac{1}{2}\tilde{r}^2\}$ and
\begin{equation}\label{thm1_2d_1}
    \min_{(\tilde{r},\tilde{u})\in\mathcal{K}_r}\left\{ \theta(r)\tilde{u} + \frac{P_0(r)}{2 m_0(r)} \tilde{r}\right\} > 0,\quad \forall r\in (0,R_0),
\end{equation}
where the minimum is taken over the energy level set 
$$
\mathcal{K}_r:=\left\{(\tilde{r},\tilde{u})\in\mathbb{R}_+\times\mathbb{R}:m_0(r)^{-1}\Big(\frac{1}{2}\tilde{u}^2 + m_0(r) N(\tilde{r}) + \frac{1}{2}\tilde{r}^2\Big)=C_0-\frac{1}{4\pi}\ln m_0(r)\right\},
$$ 
and $\theta(r)$ is defined by
\begin{equation}\label{theta_2d}
    \theta(r) := \left\{\hspace{-2mm}
    \begin{array}{cl}
\displaystyle  \frac{1-\frac{P_0(r)r}{2 m_0(r)}}{u_0(r)}   & \mbox{if } u_0(r)\neq 0 \\[5mm]
\displaystyle      \frac{\frac{1}{2}P_0(r)u_0(r)-m_0(r) \partial_r u_0(r)}{m_0(r)(-\frac{1}{2\pi} m_0(r) r^{-1}+r)}   & \mbox{if } -\frac{1}{2\pi} m_0(r) r^{-1}+r\neq 0 
    \end{array}
      \right. \,. 
\end{equation}
Here item 1 and $C_0 > \min_{\tilde{r}}\{ N(\tilde{r})+\frac{1}{2}\tilde{r}^2\}$ guarantee that at least one of the above fractions have nonzero denominator, and they are equal when both having nonzero denominators.
\end{itemize}
\end{theorem}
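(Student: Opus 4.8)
The plan is to trace characteristics and to exploit a scaling symmetry of the resulting planar ODE. Along the characteristic $X(t;r)$ (defined by $\partial_t X=u(t,X)$, $X(0;r)=r$), the enclosed mass $m(t,X(t;r))\equiv m_0(r)$ is conserved, so $X$ solves the autonomous equation $\ddot X=-m_0(r)N'(X)-X=\frac{m_0(r)}{2\pi X}-X$, with conserved energy $E=\tfrac12\dot X^2+m_0(r)N(X)+\tfrac12 X^2$; hence the orbit is exactly $\mathcal K_r$ and $E$ equals $E_0(r)=\tfrac12 u_0(r)^2+m_0(r)N(r)+\tfrac12 r^2$. Since $V_r(X):=m_0(r)N(X)+\tfrac12 X^2$ is strictly convex and blows up at $0^+$ and $+\infty$, each orbit is the equilibrium or a closed curve with some period $\tau(r)$. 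The key observation is that $X=\sqrt{m_0(r)}\,Y$ (time unchanged) sends the $m_0(r)$-equation to the universal equation $\ddot Y=\tfrac1{2\pi Y}-Y$, preserves the period, and leaves $I:=\tfrac{E}{m}+\tfrac1{4\pi}\ln m$ invariant; thus $\tau(r)$ depends on $r$ only through $I(r)=m_0(r)^{-1}\big(\tfrac12 u_0^2+m_0 N(r)+\tfrac12 r^2\big)+\tfrac1{4\pi}\ln m_0(r)$. So \eqref{thm1_2d_0}, namely $I(r)\equiv C_0$, is precisely the statement ``all characteristics are periodic with one common period.'' One also checks $\min I=\min_{\tilde r}\{N(\tilde r)+\tfrac12\tilde r^2\}$, attained exactly when the orbit is the equilibrium, so $C_0=\min$ means the whole solution is stationary and $C_0>\min$ forbids any stationary characteristic.

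For sufficiency, assume item 1 with $C_0>\min$. Every rescaled orbit is then the single closed curve $\mathcal K_*=\{\tfrac12\tilde u^2+N(\tilde r)+\tfrac12\tilde r^2=C_0\}$; letting $\mathcal Y$ be its $\tau$-periodic profile, I set $X(t;r)=\sqrt{m_0(r)}\,\mathcal Y(t+\phi(r))$ with $\phi(r)$ fixed by $\big(\mathcal Y(\phi(r)),\dot{\mathcal Y}(\phi(r))\big)=\big(r/\sqrt{m_0(r)},u_0(r)/\sqrt{m_0(r)}\big)$; this $\phi$ is single-valued and $C^1$ since at every point of a genuine orbit at least one of $\dot{\mathcal Y},\ddot{\mathcal Y}$ is nonzero, and differentiating the two defining relations produces the two expressions in \eqref{theta_2d} for $\phi'(r)$, which coincide by virtue of \eqref{thm1_2d_0}. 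Then $\partial_r X(t;r)=\tfrac{P_0(r)}{2\sqrt{m_0}}\mathcal Y(t+\phi)+\sqrt{m_0}\,\phi'(r)\dot{\mathcal Y}(t+\phi)$, and since $(\mathcal Y(t+\phi),\dot{\mathcal Y}(t+\phi))$ traverses all of $\mathcal K_*$ as $t$ varies, requiring $\partial_r X>0$ for all $t$ is, after undoing the rescaling, exactly \eqref{thm1_2d_1}. Setting $u(t,X(t;r))=\dot X(t;r)$, $P(t,X(t;r))=P_0(r)/\partial_r X(t;r)$, $R(t)=X(t;R_0)$, one checks that $r\mapsto X(t;r)$ is a $C^1$ diffeomorphism $[0,R_0]\to[0,R(t)]$ (the $r\to0$ behaviour uses the consistency conditions, in particular $\lim_{r\to0}r/\sqrt{m_0(r)}\in(0,\infty)$), and verifies \eqref{eqrad}, the initial data, and $R'(t)=u(t,R(t))$ directly. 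If $C_0=\min$ this is just the explicit stationary solution.

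For necessity, given a global classical bulk solution all characteristics are global, cannot cross, and $\partial_r X(t;r)=P_0(r)/P(t,X(t;r))$ is finite and strictly positive; each characteristic is periodic (or stationary) with period $\tau(r)$. I claim $\tau$ is constant: $\eta:=\partial_r X$ solves the inhomogeneous Hill-type equation $\ddot\eta+(m_0(r)N''(X)+1)\eta=-P_0(r)N'(X)$, whose homogeneous part has the $\tau(r)$-periodic solution $\dot X$, so its monodromy has the double eigenvalue $1$; differentiating the flow with respect to the energy and to $m_0$ then exhibits a secular term in $\eta$ proportional to $\tau'(r)\,t\,\dot X(t)$, which is unbounded and sign-changing unless $\tau'(r)=0$, contradicting $\eta>0$ and bounded on finite intervals. (Equivalently, if $\tau$ were non-constant near some $r_0$, equidistribution of $\{n\tau(r_0)\bmod\tau(r)\}$ would produce a time violating the ordering of $X(\cdot;r)$ and $X(\cdot;r_0)$.) Thus $\tau(r)\equiv\tau$, so $\mathcal T(I(r))\equiv\tau$ with $\mathcal T$ the period as a function of the invariant; since $\mathcal T$ is real-analytic and not identically constant, $\mathcal T^{-1}(\tau)$ is discrete, and continuity of $I$ forces $I\equiv C_0$, i.e.\ \eqref{thm1_2d_0}. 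With item 1 in hand the sufficiency construction applies, and $\partial_r X>0$ for all $t$ yields \eqref{thm1_2d_1} (or the stationary solution if $C_0=\min$); the continuation criterion Lemma \ref{lem_crit} ensures that a zero of $\partial_r X$ genuinely obstructs global existence.

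The main obstacle I anticipate is the period function $\mathcal T$: one must show it is not identically constant — ideally strictly monotone — on the relevant range of the invariant, which reduces to comparing its value $2\pi/\sqrt d$ (equal to $\pi\sqrt2$ here) at the equilibrium with its limit $\pi$ as the energy tends to infinity. This is the classical ``monotonicity of the period'' issue, and it is exactly what fails for $d=4$: there $\ddot X=\tfrac{2c_4 m_0(r)}{X^3}-X$ is the radial equation of a two-dimensional isotropic harmonic oscillator, all orbits have period $\pi$ (indeed $=2\pi/\sqrt4$), so $\mathcal T$ is constant, item 1 becomes vacuous and the characterization breaks — hence $d=4$ is excluded, and $d=2$ (where $\pi\sqrt2\neq\pi$) falls under the general $d\neq4$ pattern. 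A secondary nuisance is the regularity of the constructed solution at $r=0$ and at the free boundary $r=R(t)$, where the consistency hypotheses on $(P_0,u_0,R_0)$ must be used with care.
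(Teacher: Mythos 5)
Your overall architecture is the same as the paper's: trace characteristics of \eqref{eqrad}, use the scaling $X=\sqrt{m_0(r)}\,Y$ (the $d=2$ case of Lemma \ref{lem_Tscale}) to see that the period depends only on the invariant $I(r)=m_0^{-1}\cE_0(r)+\frac{1}{4\pi}\ln m_0(r)$, argue that a global classical bulk solution forces a common period and hence \eqref{thm1_2d_0}, and then characterize blow-up within one period by positivity of $\partial_r X$ along the full orbit, which is exactly \eqref{thm1_2d_1}. Your phase-function computation $\partial_r X=\frac{P_0}{2m_0}r(t)+\phi'(r)u(t)$ with $\phi'=\theta$ reproduces, in different clothing, the paper's explicit solution $f(t)=\theta u(t)+\frac{P_0}{2m_0}r(t)$ of the $(P,w)$ system \eqref{eqrad_d}, so that part is sound. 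Your secular-term argument for constancy of the period (differentiating $X(t+\tau(r);r)=X(t;r)$ in $r$ to get $\eta(t+n\tau)=\eta(t)-n\tau'(r)\dot X(t)$, incompatible with $\eta=\partial_r X\ge0$ unless $\tau'=0$) is a correct and arguably cleaner alternative to the paper's rational-approximation/orbit-crossing argument in Lemma \ref{lem_Tconst}; you do gloss over the mixed case where some characteristics are stationary and others are not ($\tau$ is then not obviously differentiable at the interface), which the paper handles by a continuity/gluing step using \eqref{Tmin}, but that is a repairable omission.

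The genuine gap is the step from ``common period'' to \eqref{thm1_2d_0}. You need the period function $\mathcal T$ of the rescaled system to have discrete level sets, i.e.\ to be non-constant on \emph{every} sub-interval of its domain, and you obtain this by asserting that $\mathcal T$ is real-analytic and proposing to get non-constancy from the endpoint comparison $\mathcal T(\cE_{\min}^+)=\pi\sqrt{2}$ versus a claimed limit $\pi$ as $E\to\infty$. Neither ingredient is proved in your proposal, and together they are precisely the technical core of the paper: real analyticity is Theorem \ref{thm_ana}, whose proof requires a careful holomorphic extension of the inverse of the effective potential to sector-shaped complex domains (Lemmas \ref{lem_holo}--\ref{lem_U22}), and non-constancy is obtained there not by an $E\to\infty$ asymptotic but by the local expansion $T'(E)=\pi c_V (V''(X_0))^{-7/2}+O(E^{1/2})$ of Lemma \ref{lem_Texpan}, with $c_V=\frac16 d^2(d-1)(d-4)\ne0$ for $d=2$ (Corollary \ref{cor_Vd}, which is what Lemma \ref{lem_Tnon} rests on). Without real analyticity, your endpoint comparison only rules out global constancy: $\mathcal T^{-1}(\tau)$ could a priori contain an interval, and then continuity of $I(r)$ would not force $I\equiv C_0$, so the necessity of \eqref{thm1_2d_0} would not follow. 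Moreover the $E\to\infty$ limit $\pi$ is itself an unproven (if plausible) asymptotic statement. So the key lemma is missing rather than merely deferred; the remaining secondary issues you flag (regularity at $r=0$ and at the free boundary, handled in the paper through Lemma \ref{lem_crit}) are indeed minor by comparison.
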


Similarly, differentiating the energy level equation  \eqref{thm1_2d_0} with respect to $r$ gives
\begin{equation}\begin{split}
-P_0(r) & \Big(\frac{1}{2}u_0(r)^2+m_0(r) N(r) + \frac{1}{2}r^2\Big) \\
& + m_0(r)\Big( u_0(r) \partial_r u_0(r) + P_0(r) N(r) + m_0(r) \partial_r N(r) + r\Big)=-\frac{1}{4\pi}m_0(r)P_0(r)\,.
\end{split}\end{equation}
Using $N(r) = -\frac{1}{2\pi}\ln r$, one can rewrite it as 
\begin{equation}\label{C0d_2d}
    u_0(r)\Big(-\frac{1}{2}P_0(r)u_0(r)+m_0(r) \partial_r u_0(r)\Big) + m_0(r)\Big(1-\frac{P_0(r)r}{2 m_0(r)}\Big)\Big(-\frac{1}{2\pi} m_0(r) r^{-1}+r\Big) = 0
\end{equation}
which shows that the two fractions in \eqref{theta_2d} are equal whenever they have nonzero denominators. The case where both denominators are zero also correspond to stationary solutions.

\subsection{Sketch of the proof \& Plan of the paper}

The proof of Theorems \ref{thm1} and \ref{thm1_2d} is based on tracing the characteristics. In fact, along each characteristic of \eqref{eqrad}, the quantities $r$ and $u$ satisfies a closed ODE system \eqref{eqru}. Each ODE system is a one-dimensional Hamiltonian system whose solution is necessarily periodic. The key observation is that the existence of global classical bulk solution requires that all these ODE systems necessarily have \emph{the same period} (Lemma \ref{lem_Tconst}). In fact, by some elementary argument, it is not hard to show that if nearby characteristics have various periods, then they will intersect at some time which breaks the classical solution to \eqref{eqrad}. 

In Section \ref{sec_per} we will first prove that for $d\ge 2,\,d\ne 4$ the energy level of every characteristic (with suitable rescaling) has to be the same, leading to the condition \eqref{thm1_0} (or its 2D counterpart \eqref{thm1_2d_0}). This is a consequence of the fact that the period $T(E)$, as a function of the energy level $E$, is \emph{non-constant} on any interval (Lemma \ref{lem_Tnon}, see Figure \ref{fig0} as an illustration), and the energy level changes continuously among characteristics. The proof of Lemma \ref{lem_Tnon} is based on the \emph{real analytic} property of $T(E)$ which will be established independently in Section \ref{sec_ana}. 

\begin{figure}[ht]
    \centering
    \includegraphics[width=0.8\textwidth]{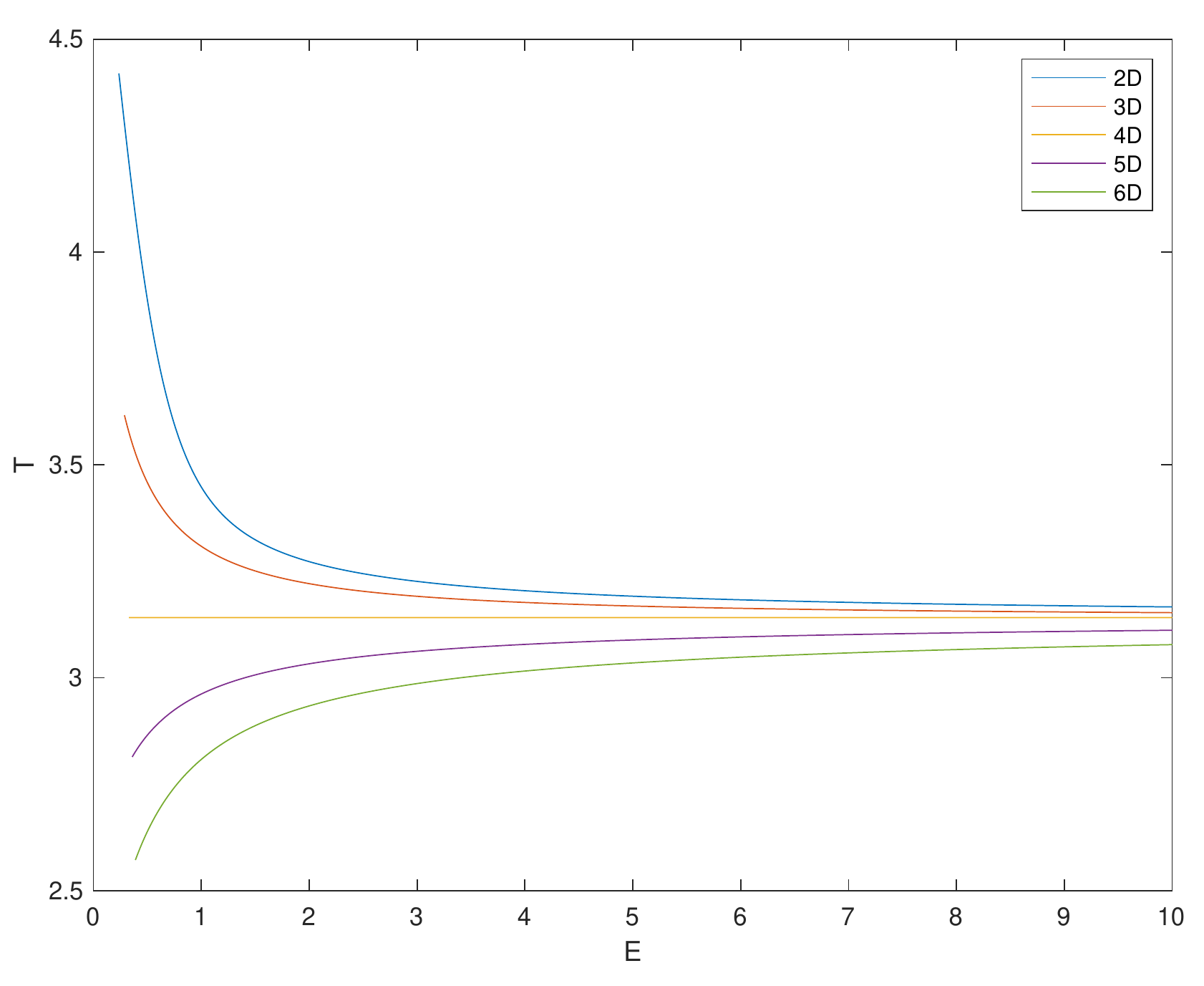}
    \caption{Period $T$ of the ODE system \eqref{eqru} (with $m=1$) as a function of the energy level $E$. From the top curve to the bottom curve,  we take the spatial dimension for $N(r)$ as $2,3,4,5,6$. Corollary \ref{cor_Vd} below proves that $T(E)$ is constant for 4D but non-constant for other dimensions.}
    \label{fig0}
\end{figure}

Once the necessity of the energy level condition \eqref{thm1_0} (or \eqref{thm1_2d_0} in 2D) is proved, we may assume this condition and study the possible blow-up phenomenon within one period. We do this in Section \ref{sec_crit} by analyzing the ODE system \eqref{eqrad_d} satisfied by $P$ and $w:=\partial_r u$ along characteristics, and obtain the critical-threshold-type condition \eqref{thm1_1} (or \eqref{thm1_2d_1} in 2D).

\begin{remark}
To the authors' best knowledge, equality conditions like \eqref{thm1_0} have not been observed in existing literature for global classical solutions to pressureless Euler-Poisson / Euler-Poisson-alignment systems. Typically one obtains a critical-threshold-type condition like \eqref{thm1_1} as a necessary or necessary-and-sufficient condition for the existence of global classical solutions. We explain below why a condition like \eqref{thm1_0} is not needed in these references. 
\begin{itemize}
    \item For the 1D pressureless Euler-Poisson system with quadratic confinement, it is well-known that every characteristic is a harmonic oscillator with the same period \cite[Theorem 3.1]{engelberg2001critical}, and one does not require that every characteristic has the same energy level. We expect similar situation in 4D (c.f. Corollary \ref{cor_Vd}) which is left as a future work.
    \item For pressureless (repulsive) Euler-Poisson system without confinement, if one assumes radial symmetry, then all characteristics escape to infinity and thus not periodic \cite{bae2012critical}. Therefore our argument on the periods of characteristics does not apply.
    \item For Euler-alignment systems, the alignment effect is energy-dissipative and thus stablizes the characteristic ODE systems. Therefore one expects to have global solution converging to equilibrium as long as the initial condition is close to equilibrium (in certain norms) or the alignment effect is sufficiently strong \cite{CCZ16,tan2021eulerian}. Therefore an equality condition like \eqref{thm1_0} is not expected as a necessary condition.
\end{itemize}
As a summary, our system \eqref{eqrad} is energy-conservative and every characteristic is periodic. The period of every characteristic has to be the same, which requires the energy level of every characteristic being identical. This explains why a condition like \eqref{thm1_0} is necessary.
\end{remark}

\section{Proof of Theorem \ref{thm1} by characteristic tracing }

We start by providing a local-in-time existence theory and continuation criterion for bulk solutions to \eqref{eqrad}. The following continuation criterion reduces the existence problem of global classical bulk solution to the boundedness of $r^{1-d}P(t,r)$ and $\partial_r u(t,r)$. 

\begin{lemma}\label{lem_crit}
Assume $d\ge 2$, and let $(P_0,u_0,R_0)$ be a consistent initial data. Then there exists $T=T(C_0)>0,\,C_0:=\|r^{1-d}P_0(r)\|_{L^\infty(0,R_0)}+ \|\partial_r u_0\|_{L^\infty(0,R_0)}$ such that there exists a unique classical bulk solution to \eqref{eqrad} on $[0,T]$ with this initial data.

As a consequence, if $0<T<\infty$ is the maximal time of existence of a classical bulk solution to \eqref{eqrad}, then one necessarily has $\lim_{t\rightarrow T^-} \|r^{1-d}P(t,r)\|_{L^\infty_r(0,R(t))}+ \|\partial_r u(t,r)\|_{L^\infty_r(0,R(t))} = \infty$.
\end{lemma}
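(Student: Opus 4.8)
The plan is to pass to Lagrangian (characteristic) coordinates, in which the radial system \eqref{eqrad} reduces to an uncoupled one–parameter family of one–dimensional Hamiltonian ODEs. Given a consistent datum $(P_0,u_0,R_0)$, let $X(t,\alpha)$ and $U(t,\alpha)$ denote the radius and radial velocity of the characteristic issued from $r=\alpha\in[0,R_0]$, so that $\partial_t X=U$ with $X(0,\alpha)=\alpha$, $U(0,\alpha)=u_0(\alpha)$. From the continuity equation together with $P(t,0)=0$ one gets $\partial_t m(t,r)=-P(t,r)u(t,r)$, hence $\frac{d}{dt}m(t,X(t,\alpha))\equiv 0$ and therefore $m(t,X(t,\alpha))\equiv m_0(\alpha)$; substituting this into the momentum equation of \eqref{eqrad}, the characteristic equations become
\[
\partial_t X=U,\qquad \partial_t U=-\partial_r N(X)\,m_0(\alpha)-X,
\]
a family (indexed by $\alpha$) of uncoupled one–dimensional Hamiltonian systems with $H_\alpha(X,U)=\tfrac12 U^2+m_0(\alpha)N(X)+\tfrac12 X^2$. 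Since $N(X)\to+\infty$ as $X\to0^+$ (both for $d=2$ and $d\ge3$) while the confinement forces $\tfrac12 X^2\le H_\alpha$, each trajectory with $\alpha>0$ is global in $t$, periodic, and confined to a compact subset of $(0,\infty)\times\mathbb R$ determined by the time–independent value $H_\alpha$; for $\alpha=0$, $m_0(0)=0$ gives $X(t,0)\equiv0$ by $u_0(0)=0$. Uniqueness of a classical bulk solution is then immediate: its characteristics must solve this system, and then $R(t)=X(t,R_0)$, $u(t,X(t,\alpha))=U(t,\alpha)$ and $P(t,X(t,\alpha))\,\partial_\alpha X(t,\alpha)=P_0(\alpha)$ are forced.

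For local existence I would solve this ODE family (each member a genuine planar ODE, $\alpha$ being only a parameter) and reconstruct $(P,u,R)$ from the Lagrangian flow $\Phi_t:\alpha\mapsto X(t,\alpha)$: $R(t):=X(t,R_0)$, $u(t,\cdot):=U(t,\Phi_t^{-1}(\cdot))$, and $P(t,\cdot)$ the push–forward density determined by $P(t,X(t,\alpha))\partial_\alpha X(t,\alpha)=P_0(\alpha)$. Since $\partial_\alpha X\equiv1$ at $t=0$, a Gr\"onwall estimate on the first–order variational system satisfied by $(\partial_\alpha X,\partial_\alpha U)$ — whose coefficients are controlled, via the rescaling near $r=0$ described below, by $C_0$ and the fixed $R_0$ — keeps $\tfrac12\le\partial_\alpha X\le 2$ on an interval $[0,T]$ with $T=T(C_0)$. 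On that interval $\Phi_t$ is a $C^1$ diffeomorphism of $[0,R_0]$ onto $[0,R(t)]$, and one verifies directly that \eqref{eqrad} holds classically in $\{0<r<R(t)\}$ — the continuity equation by mass transport, the momentum equation from $\partial_t U=-\partial_r N(X)m_0(\alpha)-X$ combined with $m(t,X(t,\alpha))=m_0(\alpha)$ — while $R'(t)=u(t,R(t))$ by construction. The consistency conditions are exactly what make $m_0\in C^2$, hence the characteristic flow and the reconstructed $(P,u,R)$ carry the $C^1$ regularity demanded of a bulk solution.

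For the continuation criterion I would argue by contradiction. Let $0<T<\infty$ be the maximal existence time and suppose $Q_*:=\sup_{0\le t<T}\big(\|r^{1-d}P(t,\cdot)\|_{L^\infty(0,R(t))}+\|\partial_r u(t,\cdot)\|_{L^\infty(0,R(t))}\big)<\infty$. Writing $\tilde P_0(r):=r^{1-d}P_0(r)$, the reconstruction identities give, at $r=X(t,\alpha)$,
\[
r^{1-d}P=\Big(\tfrac{\alpha}{X}\Big)^{d-1}\frac{\tilde P_0(\alpha)}{\partial_\alpha X},\qquad \partial_r u=\frac{\partial_\alpha U}{\partial_\alpha X},
\]
while conservation of $H_\alpha$ together with the barrier at $0$ and the confinement already force $X(t,\alpha)/\alpha\in[c_1,c_2]$ and $X(t,\alpha)\le R_{\max}$ uniformly on $[0,T)$, with $c_1,c_2,R_{\max}$ fixed by the data. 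Hence $Q_*<\infty$ forces $\partial_\alpha X(t,\alpha)\ge c>0$ (no crossing of characteristics) and $|\partial_\alpha U|\le Q_*\,\partial_\alpha X$; since the characteristics stay in a fixed compact subset of $(0,\infty)\times\mathbb R$, the coefficients of the variational system for $(\partial_\alpha X,\partial_\alpha U)$ are bounded uniformly on $[0,T)$, and Gr\"onwall then bounds $\partial_\alpha X,\partial_\alpha U$ from above as well. Therefore $X,U\in C^1([0,T)\times[0,R_0])$ with uniformly nondegenerate $\partial_\alpha X$, so $X,U$ and the derived $(P,u,R)$ extend continuously to $t=T$, yielding a consistent state at time $T$ with $\|r^{1-d}P(T,\cdot)\|_{L^\infty}+\|\partial_r u(T,\cdot)\|_{L^\infty}\le Q_*$. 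Applying the local existence step from time $T$ extends the solution past $T$, contradicting maximality; hence $\lim_{t\to T^-}\big(\|r^{1-d}P(t,\cdot)\|_{L^\infty}+\|\partial_r u(t,\cdot)\|_{L^\infty}\big)=\infty$.

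The main obstacle, present in both halves, is obtaining these bounds \emph{uniformly up to the center} $r=0$: the Newtonian force $-\partial_r N(r)m(t,r)$ couples a singular factor $r^{1-d}$ with the vanishing mass $m_0(\alpha)=O(\alpha^{d})$, and $P_0$ itself vanishes like $\alpha^{d-1}$, so a direct estimate of $\partial_\alpha X$ and a direct smoothness argument for the flow both degenerate as $\alpha\to0^+$. I would remove this by the blow–up change of variables $X=\alpha\xi$, $U=\alpha\eta$, which turns the characteristic system into
\[
\partial_t\xi=\eta,\qquad \partial_t\eta=-\partial_r N(\xi)\,G(\alpha)-\xi,\qquad G(\alpha):=\alpha^{-d}m_0(\alpha)=\int_0^1\sigma^{d-1}\tilde P_0(\alpha\sigma)\,\rd\sigma,
\]
with $G$ bounded above and below and $\eta(0,\alpha)=u_0(\alpha)/\alpha\to\partial_r u_0(0)$; here the singularity of $N$ at $\xi=0$ is again an infinite barrier, so $\xi$ stays in a fixed compact subset of $(0,\infty)$ uniformly in $\alpha\in[0,R_0]$, which delivers the two–sided bound on $X/\alpha$ used above and the regularity up to $\alpha=0$, and makes the variational coefficients uniformly bounded in terms of $C_0$ (and $R_0$) alone. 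Everything else — global existence and periodicity of the individual characteristic ODEs, the Gr\"onwall estimates on the linear first–order variational system, and the reconstruction of $(P,u,R)$ — is routine once the Lagrangian reformulation is in place, and the fact that $T$ depends only on $C_0$ is transparent because the variational system is first order.
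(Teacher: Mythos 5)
Your proposal is correct and follows essentially the same route as the paper's proof: freeze $m$ at $m_0(\alpha)$ along characteristics, rescale by the starting radius (the paper's $\tilde r/r,\tilde u/r$, your $\xi=X/\alpha,\eta=U/\alpha$) so that the characteristic and variational (in $\alpha$) systems have coefficients controlled by $C_0$ near the center, keep $\partial_\alpha X$ bounded away from zero on a time interval $T(C_0)$ so the flow is a $C^1$ diffeomorphism, reconstruct $(P,u,R)$ by pushing forward the data, and get uniqueness from ODE uniqueness. The only differences are that you spell out the continuation-by-contradiction step, which the paper states as an immediate consequence of the local theory, and you write the density reconstruction with the Jacobian factor $P(t,X)\,\partial_\alpha X=P_0(\alpha)$.
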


\begin{proof}

Let $\Phi(t;r),\,t\ge 0,0<r\le R_0$ denote the characteristic flow, i.e., the value of $\tilde{r}(t)$ of the solution to the ODE system 
\begin{equation}\label{eqru1}\left\{\begin{split}
& \tilde{r}' = \tilde{u} \\
& \tilde{u}' = -m_0(r) \partial_r N(\tilde{r}) - \tilde{r}
\end{split}\right.,\quad \tilde{r}(0) = r,\,\tilde{u}(0) = u_0(r)\,.\end{equation}
Recall that $\partial_r N(\tilde{r}) = -c_d (d-2) \tilde{r}^{1-d}, \partial_{rr} N(\tilde{r}) = c_d (d-2)(d-1) \tilde{r}^{-d}$ (for $d\ge 3$; $d=2$ is similar). We may rewrite \eqref{eqru1} as 
\begin{equation}\label{eqru1_1}\left\{\begin{split}
& (\tilde{r}/r)' = \tilde{u}/r \\
& (\tilde{u}/r)' = -m_0(r)r^{-d} \partial_r N(\tilde{r}/r) - \tilde{r}/r
\end{split}\right.,\quad (\tilde{r}/r)(0) = 1,\,(\tilde{u}/r)(0) = u_0(r)/r\,.\end{equation}
Using the condition $u_0(0)=0$ for consistent initial data, the above initial data and RHS coefficient $m_0(r)r^{-d}$ are also controlled by $C C_0$. This implies the existence of $T=T(C_0)>0$ such that
\begin{equation}\label{aux}
    (\tilde{r}/r)(t)\in (1/2,2),\quad \mbox{and} \quad |(\tilde{u}/r)(t)| \le 2C_0 \quad \mbox{for } 0<t\le T\,.
\end{equation}

By viewing $\tilde{r},\tilde{u}$ in \eqref{eqru1} as functions of $t$ and $r$ and differentiating with respect to $r$, we get
\begin{equation}\label{eqru2}\left\{\begin{split}
& \partial_r \tilde{r}' = \partial_r \tilde{u} \\
& \partial_r \tilde{u}' = -P_0(r) \partial_r N(\tilde{r}) - m_0(r) \partial_{rr}N(\tilde{r}) \partial_r \tilde{r} - \partial_r \tilde{r}
\end{split}\right.,\quad \partial_r \tilde{r}(0) = 1,\,\partial_r \tilde{u}(0) = \partial_r u_0(r)\,.\end{equation}
For $0<t\le T$, we have the estimate for the coefficients
\begin{equation}
    |P_0(r) \partial_r N(\tilde{r})| \le C P_0(r) r^{1-d} (\tilde{r}/r)^{1-d} \le C C_0,\quad |m_0(r) \partial_{rr}N(\tilde{r})| \le C C_0\,,
\end{equation}
by using \eqref{aux}.
Therefore, by making $T=T(C_0)$ smaller if necessary, we may guarantee that $\partial_r \tilde{r}(t;r) > 1/2 $ for any $0<t\le T$ and  $0< r \le R_0$, i.e., $\partial_r \Phi(t;r) > 1/2$. Therefore, for any $0<t\le T$, $\Phi(t;\cdot)$ is an invertible map from $(0,R_0]$ to $(0,R(t)],\,R(t) := \Phi(t;R_0)$. Denoting its inverse as $\Psi(t;\cdot): (0,R(t)]\rightarrow (0,R_0]$, t hen $\Psi(t;\cdot)$ is $C^1$ with $\|\partial_r \Psi(t;r)\|_{L^\infty_r(0,R(t))} \le 2$ for any $0<t\le T$. 

Then we may define the solution $(P,u,R)$ on $[0,T]$ by tracing back the characteristics:
\begin{equation}
    P(t,r) = P_0(\Psi(t;r)),\quad u(t,r) = u_0(\Psi(t;r)),\quad 0<r\le R(t)\,.
\end{equation}
It is straightforward to verify that $(P,u,R)$ is a classical bulk solution to \eqref{eqrad} on $[0,T]$ with the desired initial data. The uniqueness of classical bulk solution follows from the uniqueness of the solution to the characteristic ODE \eqref{eqru1}.

\end{proof}

Now, let us consider a classical bulk solution to \eqref{eqrad} and discuss its global-in-time existence. Denote $' = \partial_t + u\partial_r$ as the derivative along characteristics. Then it is clear that $m'=0$, i.e., $m$ is constant along characteristics. This constant value of $m$ is positive as long as the characteristics starts in $(0,R_0)$, by the assumption $\lim_{r\rightarrow 0^+}r^{1-d}P_0(r)>0$ (which corresponding to $\rho_0(0)>0$ for \eqref{eq}). The evolution of $r$ and $u$ along characteristics is given by
\begin{equation}\label{eqru}\left\{\begin{split}
& r' = u \\
& u' = -m \partial_r N(r) - r
\end{split}\right.\end{equation}
which is a closed ODE system along characteristics (depending on the constant value $m$ along these characteristics). This system has the particle energy as a conserved quantity:
\begin{equation}
\cE(r,u;m) = \frac{1}{2}u^2 + m N(r) + \frac{1}{2}r^2.
\end{equation}
Since \eqref{eqru} is two-dimensional and the energy $\cE$ is convex and coercive on $(r,u)\in (0,\infty)\times \mathbb{R}$ for any fixed $m>0$, we see that any solution to \eqref{eqru} is necessarily periodic (possibly degenerate to an equilibrium point), and the orbit is a level set of $\cE$, uniquely determined by its energy level.

\subsection{Analysis of the period}\label{sec_per}

We denote $T(\cE_0;m)$ as the period of the orbit for \eqref{eqru} with energy level $\cE_0$, which is defined for any $\cE_0>\cE_{\min}(m):=\min_r \{m N(r) + \frac{1}{2}r^2\}$. We may also define $T(\cE_{\min}(m);m)$ by taking the limit, and make $T(\cdot;m)$ continuous on $[\cE_{\min}(m),\infty)$. In fact, one can justify the existence of the limit $\lim_{\cE\rightarrow \cE_{\min}(m)^+}T(\cE;m)$ by linearizing the dynamics of $\eqref{eqru}$ near the equilibrium point. As a function of $\cE_0$ and $m$, $T$ is clearly continuous on $\{(m,\cE_0): m>0,\,\cE_0\in [\cE_{\min}(m),\infty)\}$.

Furthermore, linearization gives the value of $T(\cE_{\min}(m);m)$ as 
\begin{equation}\label{Tmin}
T(\cE_{\min}(m);m) = 2\pi\Big(\Big(\frac{\rd^2}{\rd{r}^2}\big(m N(r) + \frac{1}{2}r^2\big)\Big)\Big|_{r=\text{argmin} \{m N(r) + \frac{1}{2}r^2\}}\Big)^{-1/2} =: \tau_d\,,
\end{equation}
by approximating the orbits near the equilibrium by the right harmonic oscillator.
Using the definition $N(r) = c_d r^{2-d}$, it is easy to verify that this expression is independent of $m$, and thus we may denote it as $\tau_d$. 

For a given consistent initial data $(P_0,u_0,R_0)$, denote
\begin{equation}
\cE_0(r) = \frac{1}{2}u_0(r)^2 + m_0(r) N(r) + \frac{1}{2}r^2,\quad r\in (0,R_0)
\end{equation}
as the initial energy level of each characteristic.
\begin{lemma}\label{lem_Tconst}
If there exists a global classical bulk solution to \eqref{eqrad} with consistent initial data $(P_0,u_0,R_0)$, then $T(\cE_0(r);m_0(r))$ is constant in $r\in (0,R_0)$.
\end{lemma}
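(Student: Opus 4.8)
The plan is to argue by contradiction: suppose a global classical bulk solution exists but $T(\cE_0(r);m_0(r))$ is not constant in $r\in(0,R_0)$. Since $\cE_0$ and $m_0$ are continuous, $r\mapsto T(\cE_0(r);m_0(r))$ is continuous (using continuity of $T$ on its domain, as noted after \eqref{Tmin}), so if it is non-constant there exist $r_1<r_2$ in $(0,R_0)$ with $T(\cE_0(r_1);m_0(r_1))\ne T(\cE_0(r_2);m_0(r_2))$; by shrinking the interval I may assume the two characteristics starting from $r_1$ and $r_2$ stay disjoint in the $r$-variable at $t=0$ and that the map $r\mapsto(r,u_0(r))$ is injective nearby. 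The heuristic is that a classical bulk solution gives, for each $t$, a $C^1$ diffeomorphism $\Phi(t;\cdot)$ from $(0,R_0]$ onto $(0,R(t)]$ (by the characteristic-tracing construction in Lemma \ref{lem_crit}, extended to all of $[0,\infty)$), and two characteristics with the same $m$ but different periods must eventually have their orbits (level sets of $\cE$) meet, forcing a crossing in $r$, i.e. $\Phi(t;r_1)=\Phi(t;r_2)$ for some $t$, contradicting injectivity of $\Phi(t;\cdot)$.

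To make this precise I would proceed in steps. First, observe that the full characteristic flow $\Phi(t;r)$ is defined for all $t\ge 0$ because a global classical bulk solution exists and, by the continuation criterion Lemma \ref{lem_crit}, $r^{1-d}P$ and $\partial_r u$ stay bounded on every $[0,T]$; moreover $\Phi(t;\cdot)$ is a $C^1$ increasing bijection onto $(0,R(t)]$, hence strictly order-preserving, so for each fixed $t$, $r_1<r_2$ implies $\Phi(t;r_1)<\Phi(t;r_2)$. Second, recall that along the characteristic through $r_i$ the pair $(\tilde r,\tilde u)$ solves the Hamiltonian system \eqref{eqru} with $m=m_0(r_i)$, its orbit is the level set $\{\cE(\cdot,\cdot;m_0(r_i))=\cE_0(r_i)\}$, and the motion is periodic with period $T_i:=T(\cE_0(r_i);m_0(r_i))$; in particular $\tilde r_i(t)$ is a periodic function of $t$ with period $T_i$ (allowing the degenerate equilibrium case where $\tilde r_i$ is constant, which I treat separately below).

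The core step is to derive the crossing. If both characteristics are non-degenerate, each $\tilde r_i(t)$ oscillates between its perihelion and aphelion $r_i^{\min}<r_i^{\max}$, attaining its minimum infinitely often at times spaced by $T_i$. Since $T_1\ne T_2$, a pigeonhole/rotation-number argument shows the two periodic functions $\tilde r_1(t)$ and $\tilde r_2(t)$ cannot maintain the strict ordering $\tilde r_1(t)<\tilde r_2(t)$ for all $t$: for instance one finds arbitrarily large $t$ at which $\tilde r_1$ is near its maximum while $\tilde r_2$ is near its minimum, and combined with continuity and the fact that the range-overlap structure must (after possibly shrinking $r_2-r_1$ so that $\cE_0(r_1)$ and $\cE_0(r_2)$, $m_0(r_1)$ and $m_0(r_2)$ are close, making the orbits nearly coincide and their $r$-ranges genuinely overlap) produce a time $t_\star$ with $\tilde r_1(t_\star)=\tilde r_2(t_\star)$, i.e. $\Phi(t_\star;r_1)=\Phi(t_\star;r_2)$, contradicting strict monotonicity of $\Phi(t_\star;\cdot)$. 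If one characteristic is a degenerate equilibrium (constant $\tilde r_i$) but has a different "period" $\tau_d$ from a nearby non-degenerate one, the same overlap-and-oscillation reasoning applies since the non-degenerate neighbour sweeps a nontrivial $r$-interval around the equilibrium value. The main obstacle I anticipate is precisely this crossing argument: one must carefully use continuity of $\cE_0(r)$ and $m_0(r)$ to guarantee that nearby orbits (with nearly equal energy and mass) have overlapping $r$-ranges, and then turn the incommensurability $T_1\ne T_2$ into an actual coincidence of the two $r$-values at a common time — this is where the "elementary argument" alluded to in the introduction lives, and it requires a clean quantitative statement about how the perihelion/aphelion and phase of the orbit vary continuously with $(\cE_0,m_0)$. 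Everything else (global existence of the flow, monotonicity of $\Phi(t;\cdot)$, periodicity of each orbit) follows directly from Lemma \ref{lem_crit} and the Hamiltonian structure already established.
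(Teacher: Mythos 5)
Your overall strategy is the same as the paper's: characteristics of a classical bulk solution cannot cross, and two periodic characteristics with different periods and overlapping $r$-ranges must eventually cross. However, your selection of the two characteristics has a genuine gap. From non-constancy you extract $r_1<r_2$ with $T_1\ne T_2$ and then propose to ``shrink $r_2-r_1$'' so that the orbits nearly coincide and their ranges overlap; but shrinking can destroy the very inequality $T_1\ne T_2$ you started from, and mere proximity of the starting points does not by itself give range overlap (e.g.\ if $u_0(r_1)=u_0(r_2)=0$ with the first orbit extending only downward from $r_1$ and the second only upward from $r_2$). The paper resolves both issues simultaneously: away from degenerate orbits $T(\cE_0(r);m_0(r))$ is $C^1$ in $r$, and at a point where its $r$-derivative is nonzero the inverse function theorem produces $r_\epsilon$ arbitrarily close to $r_*$ whose period differs by a prescribed $\epsilon$ \emph{and} which lies on a prescribed side of $r_*$, hence strictly inside the range of the $r_*$-orbit. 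In addition, the crossing step needs more than $T_1\ne T_2$: two periodic motions with overlapping ranges whose period ratio is rational can remain strictly ordered for all time (compare $\sin 2t$ with $\sin 2t+1+\tfrac{1}{2}\sin t$), so your ``pigeonhole/rotation-number'' claim that $\tilde r_1$ is eventually near its maximum while $\tilde r_2$ is near its minimum is unjustified as stated; the paper secures it by choosing $\epsilon$ to have irrational ratio with $T(\cE_0(r_*);m_0(r_*))$ before invoking the rational-approximation argument, and your pigeonhole selection gives no control on the ratio.

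The degenerate case is also not handled correctly in your plan. If one of the two characteristics is an equilibrium, the oscillating neighbour oscillates around \emph{its own} equilibrium (a different $m$, hence a different equilibrium radius), and since its energy excess tends to zero as the starting points approach each other, its range is a tiny interval that need not contain the equilibrium value of the first characteristic; no crossing is forced, so ``the same overlap-and-oscillation reasoning'' does not apply there. The paper avoids any crossing argument at degenerate points: the derivative claim is proved only at non-degenerate points, the limiting period $\tau_d$ in \eqref{Tmin} is observed to be independent of $m$, and a continuity patching over maximal intervals of non-degeneracy then yields $T\equiv\tau_d$ whenever such an interval has an endpoint inside $(0,R_0)$. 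Some version of these three ingredients --- controlled placement of the second characteristic inside the first orbit's range, an irrational period ratio (or a substitute), and a separate treatment of degenerate orbits --- is needed to turn your sketch into a proof.
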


\begin{proof}
$(P_0,u_0,R_0)$ are assumed to be $C^1$, and thus $\cE_0(r)$ and $T(\cE_0(r);m_0(r))$ are continuous on $(0,R_0)$, and $C^1$ at any $r\in (0,R_0)$ with $\cE_0(r) > \cE_{\min}(m_0(r))$. 

We first claim that for any $r\in (0,R_0)$, either $\frac{\rd}{\rd{r}}(T(\cE_0(r);m_0(r)))=0$, or $\cE_0(r) = \cE_{\min}(m_0(r))$.

Suppose on the contrary that $\frac{\rd}{\rd{r}}(T(\cE_0(r);m_0(r)))|_{r=r_*}\ne 0$ for some $r_*\in (0,R_0)$ with $\cE_0(r_*) > \cE_{\min}(m_0(r_*))$. Then applying the inverse function theorem, we see that for $\epsilon\in\mathbb{R}\backslash\{0\}$ with sufficiently small absolute value, there exists a unique $r_\epsilon$ near $r_*$ such that
\begin{equation}
T(\cE_0(r_\epsilon);m_0(r_\epsilon)) = T(\cE_0(r_*);m_0(r_*)) +\epsilon
\end{equation}
and $r_\epsilon\rightarrow r_*$ as $\epsilon\rightarrow 0$. $r_\epsilon-r_*$ has the same sign as $\frac{\rd}{\rd{r}}(T(\cE_0(r);m_0(r)))|_{r=r_*}\cdot \epsilon$. 

\begin{figure}[ht]
    \centering
    \includegraphics[width=0.99\textwidth]{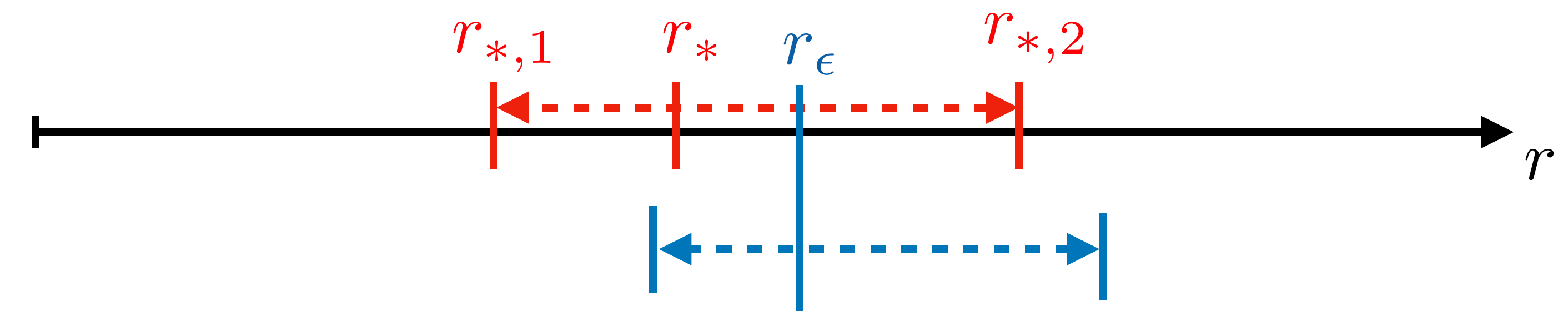}
    \caption{Proof of Lemma \ref{lem_Tconst}. The red interval is the range in which the characteristic starting from $r_*$ oscillates. If $T$ is not constant, we may always find $r_\epsilon$ nearby, for which the two characteristics starting from $r_*$ and $r_\epsilon$ intersect.}
    \label{fig1}
\end{figure}

Since $\cE_0(r_*) > \cE_{\min}(m_0(r_*))$, the orbit starting from $r_*$ is non-trivial, and its $r$-variable travels between $r_{*,1}$ and $r_{*,2}$, which are the two solutions to the equation $\cE_0(r_*) = m_0(r) N(r) + \frac{1}{2}r^2$ in $r$. They satisfy $r_{*,1}\le r_*\le r_{*,2}$ and $r_{*,1}<r_{*,2}$. 

Without loss of generality, we assume $r_{*,2}>r_*$. Then we take $\epsilon$ to have the same sign as 
$$
\frac{\rd}{\rd{r}}(T(\cE_0(r);m_0(r)))|_{r=r_*},
$$ 
sufficiently small, and having irrational ratio with $T(\cE_0(r_*);m_0(r_*))$. This guarantees $r_\epsilon\in (r_*,r_{*,2})$, and thus the ranges of the orbits starting from $r_*$ and $r_\epsilon$ intersect. Then, by a standard rational approximation argument, one can show that these two orbits intersect at some time $t$, which contradicts the existence of global classical bulk solution. This proves the claim. See Figure \ref{fig1} for illustration.

Due to \eqref{Tmin}, the conclusion of the lemma would be trivial if $\cE_0(r) = \cE_{\min}(m_0(r))$ for any $r\in (0,R_0)$. On the other hand, if there exists some $r_*\in (0,R_0)$ with $\cE_0(r_*) > \cE_{\min}(m_0(r_*))$, then by continuity we may take a maximal open interval $(r_1,r_2)\subset (0,R_0)$ containing $r_*$ with the same property. Then the claim gives that $T(\cE_0(r);m_0(r))$ is constant on $(r_1,r_2)$. If $(r_1,r_2)=(0,R_0)$, then the conclusion follows; otherwise, if we assume without loss of generality that $r_1>0$, then $\cE_0(r_1) = \cE_{\min}(m_0(r_1))$, which gives $T(\cE_0(r_1);m_0(r_1))=\tau_d$ by \eqref{Tmin}. Then $T(\cE_0(r);m_0(r))=\tau_d$ on $(r_1,r_2)$ by continuity. Then applying this for every $r$ with with $\cE_0(r) > \cE_{\min}(m_0(r))$, we see that $T(\cE_0(r);m_0(r))=\tau_d$ for every $r\in (0,R_0)$, which implies the conclusion.
\end{proof}

\begin{remark}
Following the idea of this proof, one can see that there is no global classical solution if one requires $P$ and $u$ to be defined on the whole space, provided that $P_0$ is compactly supported. In fact, let $[0,R]$ be an interval containing $\supp P_0$, then $m_0(r)$ is constant (denoted as $M_0$) for $r>R$. Then every characteristic starting from $r>R$ solves the ODE system \eqref{eqru} with $m$ replaced by the constant $M_0$. One clearly has $\lim_{r\rightarrow\infty} \cE_0(r) = \infty$. Therefore, if $[r_1(r),r_2(r)]$ is the interval where the characteristic starting from $r$ travels, we have $\lim_{r\rightarrow\infty}r_1(r)=0,\,\lim_{r\rightarrow\infty}r_2(r)=\infty$. This implies $[r_1(r),r_2(r)]$ with a sufficiently large $r$ contains a similar interval with a smaller $r$. It is clear that such two characteristics necessarily intersect, which breaks down the classical solution. This is the main reason why we introduced the concept of bulk solution and its analysis.
\end{remark}

Next we state the basic properties of $T(\cE_0;m)$.

\begin{lemma}\label{lem_Tscale}
$T(\cE_0;m)$ satisfies the scaling law
\begin{equation}\label{lem_Tscale_1}
T(m^{2/d}\cE_0;m) = T(\cE_0;1),\quad d\ge 3
\end{equation}
and
\begin{equation}\label{lem_Tscale_2}
T\Big(m\Big(\cE_0-\frac{1}{4\pi}\ln m\Big);m\Big) = T(\cE_0;1),\quad d=2.
\end{equation}
\end{lemma}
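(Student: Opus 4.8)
The plan is to conjugate the flow of \eqref{eqru} with mass parameter $m$ to the flow with parameter $1$ by a pure \emph{spatial} dilation, leaving the time variable untouched; the scaling law then drops out by tracking how the energy $\cE$ transforms under this dilation. The reason no time rescaling is needed (and indeed none is available) is that the confinement force $-r$ has a fixed ``frequency'' equal to $1$ irrespective of $m$, which is precisely why the statement asserts $T(\cdot;m)=T(\cdot;1)$ with literally equal values.

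\textbf{Step 1 (change of variables).} Set $\lambda=m^{1/d}$ for $d\ge 3$ (resp.\ $\lambda=m^{1/2}$ for $d=2$) and put $\hat r(t)=r(t)/\lambda$, $\hat u(t)=u(t)/\lambda$, with $t$ unchanged. Substituting into \eqref{eqru}, the first equation $\hat r'=\hat u$ is immediate. For the second, I would use the homogeneity $\partial_r N(r/\lambda)=\lambda^{d-1}\partial_r N(r)$ for $d\ge 3$ (resp.\ $\partial_r N(r/\lambda)=\lambda\,\partial_r N(r)$ for $d=2$), together with the linear scaling of the confinement force. One finds that the coefficient multiplying the Newtonian force becomes $m\lambda^{-d}=1$ while the confinement term is carried to itself, so $(\hat r,\hat u)$ solves \eqref{eqru} with $m$ replaced by $1$.

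\textbf{Step 2 (periods agree) and Step 3 (energy transformation).} Since the time parametrization is preserved, a periodic orbit of the $m$-system is carried bijectively onto a periodic orbit of the $1$-system with the same period, so $T(\cE_0;m)=T(\widehat\cE_0;1)$ where $\widehat\cE_0$ is the energy level of the image orbit in the $m=1$ system; for the limiting case $\cE_0=\cE_{\min}(m)$ the dilation maps equilibria to equilibria and the identity extends by continuity of $T$. It then remains to compute $\widehat\cE_0=\tfrac12\hat u^2+N(\hat r)+\tfrac12\hat r^2$ on the image orbit. For $d\ge 3$, using $N(r/\lambda)=\lambda^{d-2}N(r)$ and $\lambda^{d-2}=\lambda^{-2}m$ gives $\widehat\cE_0=\lambda^{-2}\bigl(\tfrac12 u^2+mN(r)+\tfrac12 r^2\bigr)=m^{-2/d}\cE_0$, and replacing $\cE_0$ by $m^{2/d}\cE_0$ yields \eqref{lem_Tscale_1}. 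For $d=2$, using $N(r/\lambda)=N(r)+\tfrac{1}{2\pi}\ln\lambda$ and $\ln\lambda=\tfrac12\ln m$ gives $\widehat\cE_0=m^{-1}\cE_0+\tfrac{1}{4\pi}\ln m$; solving for $\cE_0$ yields $\cE_0=m\bigl(\widehat\cE_0-\tfrac{1}{4\pi}\ln m\bigr)$, which is \eqref{lem_Tscale_2} after renaming $\widehat\cE_0$ as $\cE_0$.

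The computation is essentially routine; the only genuine subtlety is the logarithmic inhomogeneity of the two-dimensional Newtonian potential, which forces the additive shift by $-\tfrac{1}{4\pi}\ln m$ inside the period function instead of a clean multiplicative rescaling of the energy level. I expect verifying the conjugation in Step~1 (in particular getting the homogeneity exponents right) and this $2$D logarithmic bookkeeping to be the only places requiring care.
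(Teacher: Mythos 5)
Your proposal is correct and is essentially the paper's argument: both use the spatial dilation $(r,u)\mapsto(m^{1/d}r,m^{1/d}u)$ with time unchanged (you simply run it from the $m$-system to the $1$-system rather than the reverse), and the energy bookkeeping, including the $2$D logarithmic shift $\tfrac{1}{4\pi}\ln m$, matches the paper's computation exactly.
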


\begin{proof}
Notice that $\partial_r N(r) = -c r^{1-d}$ for any $d\ge 2$ (with $c>0$ depending on $d$). Therefore, if $(\bar{r},\bar{u})$ solves 
\begin{equation}\label{eqru_n}\left\{\begin{split}
& \bar{r}' = \bar{u} \\
& \bar{u}' = - \partial_r N(\bar{r}) - \bar{r}
\end{split}\right.\end{equation}
then $(m^{1/d} \bar{r}, m^{1/d} \bar{u})$ solves \eqref{eqru}, and has the same period. Notice that $N(r) = \frac{c}{d-2} r^{2-d}$ for $d\ge 3$, and thus
\begin{equation}
\cE(m^{1/d} \bar{r}, m^{1/d} \bar{u};m) = \frac{1}{2}m^{2/d}\bar{u}^2 + m\cdot m^{(2-d)/d}N(\bar{r}) + \frac{1}{2}m^{2/d}\bar{r}^2 = m^{2/d}\cE(\bar{r},\bar{u};1)
\end{equation}
therefore \eqref{lem_Tscale_1} follows. For $d=2$,
\begin{equation}
\cE(m^{1/d} \bar{r}, m^{1/d} \bar{u};m) = \frac{1}{2}m \bar{u}^2 + m \Big(N(\bar{r})-\frac{1}{4\pi}\ln m\Big) + \frac{1}{2}m \bar{r}^2 = m\Big(\cE(\bar{r},\bar{u};1)-\frac{1}{4\pi}\ln m\Big)
\end{equation}
and \eqref{lem_Tscale_2} follows.

\end{proof}

\begin{lemma}\label{lem_Tnon}
If $d\ge 2$, $d\ne 4$, then $T(\cdot;1)$ is non-constant on any sub-interval of $[\cE_{\min}(1),\infty)$.
\end{lemma}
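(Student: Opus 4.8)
The plan is to combine the real-analyticity of $E\mapsto T(E;1)$ on $(\cE_{\min}(1),\infty)$ — established independently in Section \ref{sec_ana} — with a comparison of the two boundary values of $T(\cdot;1)$, at $\cE_{\min}(1)$ and at $+\infty$. If $T(\cdot;1)$ were constant on some subinterval $I\subseteq[\cE_{\min}(1),\infty)$ with nonempty interior, then $\mathrm{int}(I)$ would be a nonempty open subinterval of the connected open interval $(\cE_{\min}(1),\infty)$, so by the identity theorem for real-analytic functions $T(\cdot;1)$ would be constant, say $\equiv c$, on all of $(\cE_{\min}(1),\infty)$; by continuity of $T(\cdot;1)$ on $[\cE_{\min}(1),\infty)$ this forces $c=T(\cE_{\min}(1);1)=\lim_{E\to\infty}T(E;1)$. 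So everything reduces to showing these two numbers are unequal precisely when $d\ne 4$.

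For the value at the bottom I would quote \eqref{Tmin}: writing $V(r):=N(r)+\tfrac12 r^2$, one checks that $V''(r_*)=d$ at the minimizer $r_*$ — using $V'(r_*)=0$ and $N(r)=c_d r^{2-d}$ when $d\ge 3$, and $V''(r_*)=2$ when $d=2$ with $N(r)=-\tfrac1{2\pi}\ln r$ — so that $T(\cE_{\min}(1);1)=\tau_d=2\pi/\sqrt{d}$. For the value at infinity I would start from $T(E;1)=\sqrt2\int_{r_1(E)}^{r_2(E)}(E-V(r))^{-1/2}\rd r$, with $r_1(E)<r_2(E)$ the two roots of $V(r)=E$, and substitute $r=\sqrt{2E}\,s$. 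For $d\ge 3$ this turns the integral into
\[
T(E;1)=2\int_{s_1(E)}^{s_2(E)}\frac{\rd s}{\sqrt{\,1-s^2-\alpha_E\,s^{2-d}\,}},\qquad \alpha_E:=c_d\,2^{(2-d)/2}E^{-d/2}\longrightarrow 0\ \ (E\to\infty),
\]
with turning points $s_1(E)\to 0$ and $s_2(E)\to 1$; for $d=2$ one gets the same formula with $-\alpha_E s^{2-d}$ replaced by $\tfrac1{2\pi E}\ln s+\tfrac1{4\pi E}\ln(2E)$, whose coefficients likewise vanish. I would then show
\[
\lim_{E\to\infty}T(E;1)=2\int_0^1\frac{\rd s}{\sqrt{1-s^2}}=\pi,
\]
and conclude: $2\pi/\sqrt d=c=\pi$ would force $d=4$, contradicting $d\ne 4$, so $T(\cdot;1)$ is non-constant on every subinterval of positive length.

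The hard part will be this limit, i.e. making the passage to the limit uniform near the moving turning points. The lower bound $\liminf_{E\to\infty}T(E;1)\ge\pi$ is painless for $d\ge 3$: from $1-s^2-\alpha_E s^{2-d}\le 1-s^2$ we get $T(E;1)\ge 2\int_{s_1(E)}^{s_2(E)}(1-s^2)^{-1/2}\rd s\to\pi$. For the upper bound I would split the integral at two fixed cutoffs $\beta$ and $1-\eta$ and into short turning-point layers: on the layer near the small root $s_1(E)$ a blow-up $s=\alpha_E^{1/(d-2)}\sigma$ shows the contribution is $O(\alpha_E^{1/(d-2)})\to 0$; on the middle range $[\beta,1-\eta]$ the radicand converges uniformly to $1-s^2$; on the layer near the large root $s_2(E)$ the radicand vanishes linearly with slope bounded away from $0$, so that layer contributes $O(\sqrt\eta)$; letting $\beta,\eta\to0$ yields $\limsup_{E\to\infty}T(E;1)\le\pi$. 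The $d=2$ case runs along the same lines with the logarithmic perturbation in place of the power one (or, alternatively, can be extracted from the scaling identity in Lemma \ref{lem_Tscale}). This turning-point bookkeeping is the only genuinely technical ingredient; everything else is either the cited analyticity or a short computation.
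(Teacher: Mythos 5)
Your proposal is correct in outline, but it takes a genuinely different route from the paper. The paper proves Lemma \ref{lem_Tnon} by combining the real-analyticity of $T$ (Theorem \ref{thm_ana}) with a \emph{local} expansion of $T'(E)$ at the bottom of the well (Lemma \ref{lem_Texpan}): $T'(E)\to \pi c_V/(V''(X_0))^{7/2}$ as $E\to 0^+$, with $c_V=\tfrac16 d^2(d-1)(d-4)$ for the potential \eqref{Vd}, so $T'$ is nonzero near the bottom whenever $d\ne 4$ and analyticity rules out constancy on any subinterval. You instead keep the analyticity-plus-identity-theorem step but replace the local expansion by a comparison of the two boundary values: $T(\cE_{\min}(1);1)=\tau_d=2\pi/\sqrt{d}$ from \eqref{Tmin} (your computation $V''(r_*)=d$ is correct, including $d=2$), versus $\lim_{E\to\infty}T(E;1)=\pi$ from the rescaled period integral, and these coincide only for $d=4$. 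Both arguments are sound; what each buys is different. The paper's route needs only local information near the equilibrium (no moving-turning-point asymptotics) and, as a byproduct, sets up the explicit $c_V$ computation whose vanishing at $d=4$ is then complemented by the exact constancy proof in Corollary \ref{cor_Vd}; your route avoids the fourth-order Taylor bookkeeping and gives a transparent global reason for non-constancy, but the price is exactly the uniform high-energy limit you flag: you must control the layers at both turning points, and your accounting should also cover the region between the small-root layer and the fixed cutoff $\beta$ (it contributes $O(\beta)$ by boundedness of the integrand there, not $O(\alpha_E^{1/(d-2)})$), which is easy but needs saying. Two smaller caveats: your approach, unlike the paper's, cannot detect anything about $d=4$ (it only fails to exclude constancy there), and the parenthetical suggestion that the $d=2$ case can be ``extracted from the scaling identity in Lemma \ref{lem_Tscale}'' does not obviously work, since that identity relates different masses $m$ and does not by itself determine $\lim_{E\to\infty}T(E;1)$; the direct logarithmic-perturbation argument you sketch is the right fallback and does go through.
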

This is a consequence of Corollary \ref{cor_Vd} which will be independently proved.

\begin{proof}[Proof of Theorems \ref{thm1} and \ref{thm1_2d}, necessity of item 1]

Assume there exists a solution to \eqref{eqrad} as stated in Theorem \ref{thm1} or Theorem \ref{thm1_2d}. By Lemma \ref{lem_Tconst}, $T(\cE_0(r);m_0(r))$ is constant in $r\in (0,R_0)$. If $d\ge 3$, then $T(\cE_0(r);m_0(r))=T(m_0(r)^{-2/d}\cE_0(r);1)$ is constant in $r$, and then, Lemma \ref{lem_Tnon} shows that  $m_0(r)^{-2/d}\cE_0(r)$ is constant since it is continuous and $d\ne 4$. This gives the necessity of item 1 in Theorem \ref{thm1}. The $d=2$ case (Theorem \ref{thm1_2d}) can be treated similarly.

\end{proof}

\subsection{Critical threshold for blow-up within a period}\label{sec_crit}

In this subsection we finish the proof of Theorems \ref{thm1} and \ref{thm1_2d} by analyzing the critical thresholds for finite-time blow-up phenomena.

\begin{proof}[Proof of Theorem \ref{thm1}]

In this proof we treat the case $d\ge 3$ and thus prove Theorem \ref{thm1}.

From now on, we assume that item 1 of Theorem \ref{thm1} is satisfied. If $C_0 = \min_{\tilde{r}}\{ N(\tilde{r})+\frac{1}{2}\tilde{r}^2\}$, then \eqref{thm1_0} implies that $u_0(r)=0$ and $r=\text{argmin}\{m_0(r)N(\tilde{r})+\frac{1}{2}\tilde{r}^2\}$ for any $r\in (0,R_0)$, and thus the characteristic dynamics \eqref{eqru} is stationary. This implies that $(P_0,u_0,R_0)$ is a stationary solution.

Otherwise, if $C_0 > \min_{\tilde{r}} \{N(\tilde{r})+\frac{1}{2}\tilde{r}^2\}$, then every characteristic \eqref{eqru} of the solution is non-stationary and has the same period $T_0=T(C_0;1)$, and thus the solution is also $T_0$-periodic if it is global. Therefore the solution is global if and only if it has no blow-up within one period. To study the condition for such blow-up, we differentiate the $u$ equation in \eqref{eqrad} with respect to $r$, combine with the $P$ equation in \eqref{eqrad} and obtain the evolution along characteristics
\begin{equation}\label{eqrad_d}\left\{\begin{split}
& P' = -P w \\
& w' = -w^2 - \partial_{rr} N(r)m - \partial_r N(r)P - 1
\end{split}\right..\end{equation}
Here $w:=\partial_r u$, $m$ is constant along characteristics, and $(r,u)$ satisfies the ODE system \eqref{eqru} which has period $T_0$. We know that \eqref{eqrad_d} necessarily has period $T_0$ as long as it has a global solution.

\eqref{C0d} shows that for every characteristic, the initial condition for the ODE system  \eqref{eqru}\eqref{eqrad_d} for $(r,u,P,w)$ satisfies
\begin{equation}\label{C0d0}
    u(0)\Big(-\frac{1}{d}P(0)u(0)+m w(0)\Big) + m\Big(1-\frac{P(0)r(0)}{d m}\Big)\Big(-c_d m (d-2)r(0)^{1-d}+r(0)\Big) = 0.
\end{equation}
Since we assume that the dynamics of \eqref{eqru} is not stationary, $u(0)$ and $-c_d m (d-2)r(0)^{1-d}+r(0)$ are not simultaneously zero for any characteristic. Therefore, the quantity $\theta$ in \eqref{theta}, now written as
\begin{equation}\label{theta0}
    \theta := \left\{\hspace{-2mm}
    \begin{array}{cl}
\displaystyle  \frac{1-\frac{P(0)r(0)}{d m}}{u(0)}   & \mbox{if } u(0)\neq 0 \\[5mm]
\displaystyle      \frac{\frac{1}{d}P(0)u(0)-m w(0)}{m(-c_d m (d-2)r(0)^{1-d}+r(0))}   & \mbox{if } -c_d m (d-2)r(0)^{1-d}+r(0)\neq 0 
    \end{array}
     \right. 
\end{equation}
in terms of the initial condition of the characteristic ODE system, is always a well-defined real number for every characteristic.

Using \eqref{eqru}, one can directly verify that the solution to \eqref{eqrad_d} is given by
\begin{equation}\label{Pw}
    P(t) = \frac{P(0)}{f(t)},\quad w(t) = \frac{f'(t)}{f(t)}
\end{equation}
where
\begin{equation}\label{Pwf}
    f(t) = \theta u(t) + \frac{P(0)}{d m}r(t).
\end{equation}
The detail of the verification is given in Remark \ref{rem_checkPw} below. Since $f(0)=1$ (see Remark \ref{rem_checkPw}), \eqref{eqrad_d} has a global solution if and only if $f(t)>0$ for any $t\ge 0$. Since $(r(t),u(t))$ travels on the energy curve $\{(\tilde{r},\tilde{u})\in\mathbb{R}_+\times\mathbb{R}:m^{-2/d}(\frac{1}{2}\tilde{u}^2 + m N(\tilde{r}) + \frac{1}{2}\tilde{r}^2)=C_0\}$, the condition that $f(t)>0,\,\forall t\ge 0$ for every characteristic is equivalent to \eqref{thm1_1}.

Therefore, in view of the extension criterion Lemma \ref{lem_crit}, with item 1 of Theorem \ref{thm1} assumed, the existence of global classical bulk solution to \eqref{eqrad} is equivalent to \eqref{thm1_1}.

\end{proof}

\begin{remark}\label{rem_checkPw}
In this remark we verify that \eqref{Pw} solves \eqref{eqrad_d} as long as $f(t)$ keeps positive. We first notice that $f(0)=1$. In fact, if $u(0)\ne 0$, this is clear from the first expression in \eqref{theta0}; otherwise, \eqref{C0d0} gives $1-\frac{P(0)r(0)}{d m}=0$ since the ODE system \eqref{eqru} is assumed to be non-stationary, and $f(0)=1$ also follows.

Then we calculate $f'$ and $f''$ by 
\begin{equation}
    f' = \theta u' + \frac{P(0)}{d m}r' = \theta(c_d m (d-2) r^{1-d} - r) + \frac{P(0)}{d m} u
\end{equation}
and
\begin{equation}\label{ddf}\begin{split}
    f'' = & \theta(-c_d m (d-2)(d-1) r^{-d} - 1)r' + \frac{P(0)}{d m} u'\\  = & \theta(-c_d m (d-2)(d-1) r^{-d} - 1)u + \frac{P(0)}{d m} (c_d m (d-2) r^{1-d} - r)\,.
\end{split}\end{equation}
It follows that $f'(0)=w(0)$ by reasoning similarly as the previous paragraph.

Therefore \eqref{Pw} satisfies the initial condition of \eqref{eqrad_d}, and the $P$-equation in \eqref{eqrad_d} is clear. To check the $w$-equation, we first calculate
\begin{equation}
    w' = \frac{f''}{f} - \frac{(f')^2}{f^2}= \frac{f''}{f} - w^2\,.
\end{equation}
Therefore it suffices to check
\begin{equation}
    -c_d m (d-1)(d-2)r^{-d} + c_d (d-2)r^{1-d} \frac{P(0)}{f} - 1 = \frac{f''}{f}\,,
\end{equation}
i.e.,
\begin{equation}
    (-c_d m (d-1)(d-2)r^{-d}-1)\Big(\theta u + \frac{P(0)}{d m}r\Big) + c_d (d-2)r^{1-d} P(0)  = f''\,.
\end{equation}
This coincide with the previous calculation of $f''$ in \eqref{ddf}.

\end{remark}

\begin{proof}[Proof of Theorem \ref{thm1_2d}]

Similar to the previous proof, we may assume that item 1 of Theorem \ref{thm1_2d} is satisfied, and $C_0 > \min_{\tilde{r}} \{N(\tilde{r})+\frac{1}{2}\tilde{r}^2\}$, so that every characteristic \eqref{eqru} of the solution is non-stationary and has the same period $T_0=T(C_0;1)$. Then it suffices to analyze whether the $(P,w)$ dynamics \eqref{eqrad_d} has a global solution for given $r\in (0,R_0)$.

\eqref{C0d_2d} shows that for every characteristic, the initial condition for the ODE system  \eqref{eqru}\eqref{eqrad_d} for $(r,u,P,w)$ satisfies
\begin{equation}
    u(0)\Big(-\frac{1}{2}P(0)u(0)+m w(0)\Big) + m\Big(1-\frac{P(0)r(0)}{2 m}\Big)\Big(-\frac{1}{2\pi} m r(0)^{-1}+r(0)\Big) = 0.
\end{equation}
Since we assume that the dynamics of \eqref{eqru} is not stationary, $u(0)$ and $-\frac{1}{2\pi}m r(0)^{-1}+r(0)$ are not simultaneously zero for any characteristic. Therefore, the quantity $\theta$ in \eqref{theta_2d}, now written as
\begin{equation}
    \theta := \left\{\hspace{-2mm}
    \begin{array}{cl}
\displaystyle  \frac{1-\frac{P(0)r(0)}{2 m}}{u(0)}   & \mbox{if } u(0)\neq 0 \\[5mm]
\displaystyle      \frac{\frac{1}{2}P(0)u(0)-m w(0)}{m(-\frac{1}{2\pi}m r(0)^{-1}+r(0))}   & \mbox{if } -\frac{1}{2\pi} m r(0)^{-1}+r(0)\neq 0 
    \end{array}
     \right.
\end{equation}
in terms of the initial condition of the characteristic ODE system, is always a well-defined real number for every characteristic. Then one can show that \eqref{Pw} with \eqref{Pwf} again solves \eqref{eqrad_d}, and thus \eqref{eqrad_d} has a global solution if and only if $f(t)>0$ for any $t\ge 0$. Then conclusion is obtained similarly as the previous proof.

\end{proof}

\section{Analysis of the period of general 1D Hamiltonian systems}\label{sec_ana}

This section analyzes how the period of general 1D Hamiltonian systems changes with respect to the energy level of the orbit. This section is independent of the content of other sections.

Let $(x(t),v(t))$ be the solution to a 1D Hamiltonian system
\begin{equation}\label{xv}\left\{\begin{split}
& \dot{x} = v \\
& \dot{v}= - V'(x)
\end{split}\right.\end{equation}
where $V$ is a potential function defined on $(X_1,X_2)\subset \mathbb{R}$, where $-\infty\le X_1<X_2\le \infty$. We assume that $V$ satisfies the following properties:
\begin{itemize}
\item[{\bf (V1)}] $V$ is smooth on $(X_1,X_2)$, $\lim_{x\rightarrow X_1^+}V(x)=\lim_{x\rightarrow X_2^-}V(x)=\infty$. There exists $X_0\in (X_1,X_2)$ such that $V(X_0)=V'(X_0)=0$, $V'(x)<0$ on $(X_1,X_0)$ and $V'(x)>0$ on $(X_0,X_2)$.
\item[{\bf (V2)}] $V''(X_0)>0$.
\item[{\bf (V3)}] $V$ is real analytic on $(X_1,X_2)$.
\end{itemize}
The property {\bf (V3)} clearly implies that for any $R_1,R$ with $X_1<R_1<X_0<R<X_2$, $V(x)$ has a holomorphic extension to a neighborhood of the interval $(R_1,R)$ in the complex plane. The total energy
\begin{equation}
\cE(x,v) = \frac{1}{2}v^2 + V(x)
\end{equation}
is conserved along the solution to \eqref{xv}. Due to {\bf (V1)}, any energy level set for an energy level $E>0$ in the $(x,v)$-plane is a compact and connected simple curve, and  any solution travels periodically along such a level curve. For any $E>0$, the period at energy level $E$ is given by
\begin{equation}\label{TE}
\frac{1}{2}T(E) = \int_{x_1(E)}^{x_2(E)} \frac{1}{\sqrt{2(E - V(x))}}\rd{x}
\end{equation}
where $x_1(E)<x_2(E)$ are determined by the equation $E = V(x)$. These are classical results that can be found in \cite{arnol2013mathematical}.

The main result of this section is the following.
\begin{theorem}\label{thm_ana}
Assume $V$ satisfies {\bf (V1)}-{\bf (V3)}. Then $T(E)$ in \eqref{TE} is real analytic on $(0,\infty)$.
\end{theorem}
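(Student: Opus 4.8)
\textbf{Proof proposal for Theorem \ref{thm_ana}.}

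The plan is to change variables so that the singular integral \eqref{TE} becomes manifestly a smooth — in fact real analytic — function of $E$, and then deploy a complex-analytic argument. The obstruction to differentiating under the integral sign in \eqref{TE} directly is the inverse-square-root singularity at the turning points $x_1(E), x_2(E)$, where the Jacobian of the substitution degenerates; the whole point of \textbf{(V2)} is precisely to control this degeneracy. So first I would fix an energy level $E_0 > 0$, choose $R_1, R$ with $X_1 < R_1 < X_0 < R < X_2$ such that $x_1(E), x_2(E)$ stay in $(R_1,R)$ for $E$ in a neighborhood of $E_0$, and split the integral at $X_0$: $\frac{1}{2}T(E) = \int_{x_1(E)}^{X_0} + \int_{X_0}^{x_2(E)}$. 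On the right piece, since $V' > 0$ on $(X_0, X_2)$ and $V''(X_0)>0$, the map $x \mapsto \sqrt{V(x)}$ (extended appropriately near $X_0$ using $V(X_0)=0$) is a real analytic diffeomorphism onto its image near $X_0$; this is the classical Morse-type normal form, and writing $s = \sqrt{V(x)}$, i.e. $x = \psi(s)$ with $\psi$ real analytic, turns $\int_{X_0}^{x_2(E)} \frac{dx}{\sqrt{2(E-V(x))}}$ into $\int_0^{\sqrt{E}} \frac{\psi'(s)}{\sqrt{2(E-s^2)}}\,ds$, and then $s = \sqrt{E}\,\sigma$ gives $\frac{1}{\sqrt{2}}\int_0^1 \frac{\psi'(\sqrt{E}\sigma)}{\sqrt{1-\sigma^2}}\,d\sigma$. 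The analogous substitution on the left piece (with $V' < 0$, taking $s = -\sqrt{V(x)}$ or equivalently $s=\sqrt{V(x)}$ on the other branch) produces a similar clean integral. The upshot is
\begin{equation}
\frac{1}{2}T(E) = \frac{1}{\sqrt 2}\int_0^1 \frac{g(\sqrt E\,\sigma)}{\sqrt{1-\sigma^2}}\,d\sigma
\end{equation}
for a real analytic function $g$ built from the two branches of $\psi'$, defined on an interval $(-\delta,\delta)$ containing $0$.

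Now the task reduces to showing that $E \mapsto \int_0^1 \frac{g(\sqrt E\,\sigma)}{\sqrt{1-\sigma^2}}\,d\sigma$ is real analytic in $E$ near $E_0 > 0$. There are two regimes. Away from $E = 0$ — which is the case we need, since $T$ is only claimed analytic on $(0,\infty)$ — one can absorb the $\sqrt{E}$: write $g(\sqrt E\,\sigma) = \sum_{k\ge 0} a_k (\sqrt E\,\sigma)^k = \sum_k a_k E^{k/2}\sigma^k$, so termwise $\int_0^1 \frac{\sigma^k}{\sqrt{1-\sigma^2}}\,d\sigma$ is an explicit constant (a Wallis-type integral) and $T(E)$ becomes a convergent power series in $\sqrt E$; since $E_0 > 0$, $\sqrt{\cdot}$ is analytic there, and composition of analytic functions is analytic. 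Care is needed to justify interchanging sum and integral — but $g$ being real analytic on $(-\delta,\delta)$ means $\sum |a_k| \rho^k < \infty$ for some $\rho > \sqrt{E_0}$, so on a small enough $E$-neighborhood the series $\sum_k a_k E^{k/2}\sigma^k$ converges absolutely and uniformly in $\sigma\in[0,1]$, and $\frac{1}{\sqrt{1-\sigma^2}}$ is integrable, so dominated convergence applies. Alternatively, and perhaps more robustly, one can argue directly with a complex parameter: for $E$ in a complex disk around $E_0$, $\sqrt E$ is well-defined and holomorphic, $g(\sqrt E \sigma)$ is jointly continuous in $(E,\sigma)$ and holomorphic in $E$ for each $\sigma$, the integrand is dominated by an $L^1(d\sigma)$ function uniformly for $E$ in a compact subdomain, hence by Morera's theorem plus Fubini the integral is holomorphic in $E$; restricting to real $E$ gives real analyticity.

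I expect the main obstacle — and the step most worth writing carefully — to be the normal-form substitution near the turning points, i.e. verifying that $s \mapsto \psi(s)$ is genuinely real analytic (not merely smooth) across $s = 0$, including the matching of the two one-sided branches at $X_0$ into a single analytic $g$. This rests on \textbf{(V2)}: near $X_0$, $V(x) = \frac{1}{2}V''(X_0)(x-X_0)^2 h(x)$ with $h$ real analytic and $h(X_0) = 1 > 0$, so $\sqrt{V(x)} = |x - X_0|\sqrt{\tfrac12 V''(X_0)}\,\sqrt{h(x)}$, and $(x-X_0)\sqrt{\tfrac12 V''(X_0)h(x)} =: \Sigma(x)$ is a real analytic function of $x$ near $X_0$ with $\Sigma(X_0)=0$, $\Sigma'(X_0) = \sqrt{\tfrac12 V''(X_0)} \ne 0$; the analytic inverse function theorem then gives $x = \psi(\Sigma)$ real analytic near $\Sigma = 0$, and $g(s)$ is assembled as $\psi'$ evaluated on the two branches of $s$. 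This is where \textbf{(V3)} (beyond mere smoothness) is essential and where one must be scrupulous that no $\sqrt{\phantom{x}}$ or $|\cdot|$ leaks a non-analyticity into $g$. The remaining ingredients — the split at $X_0$, the $\sigma = s/\sqrt E$ rescaling, and the holomorphy-under-the-integral argument — are standard once this normal form is in place.
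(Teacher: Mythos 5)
Your normal-form strategy is sound in outline, but as written it only proves analyticity of $T$ near $E=0$, not on all of $(0,\infty)$: the gap is the domain of $g$. You construct $\psi$ (hence $g$) only near the turning point (``a real analytic diffeomorphism onto its image near $X_0$'') and record $g$ as defined on $(-\delta,\delta)$. However, the representation $\tfrac12 T(E)=\tfrac1{\sqrt2}\int_0^1 g(\sqrt E\,\sigma)(1-\sigma^2)^{-1/2}\rd\sigma$ uses $g$ on the whole segment $[0,\sqrt E]$ (the substitution must remain valid all the way up to $x_2(E)$, not just near $X_0$), and your holomorphy-in-$E$ argument needs a holomorphic extension of $g$ to a complex neighbourhood of $[0,\sqrt{E_0}]$; with $g$ only given on $(-\delta,\delta)$ both steps fail once $E_0\ge\delta^2$. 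Moreover, your first justification is flawed on its own terms: real analyticity of $g$ on $(-\delta,\delta)$ does not yield a single Taylor expansion at $0$ with $\sum_k|a_k|\rho^k<\infty$ for some $\rho>\sqrt{E_0}$ --- the radius of convergence at $0$ can be much smaller than the interval of analyticity, let alone exceed an arbitrary $\sqrt{E_0}$ --- so only your Morera/dominated-convergence variant is viable.

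The gap is easily filled, and then your route does work for every $E_0>0$, because under {\bf (V1)} the substitution is in fact global. Writing $V(x)=(x-X_0)^2\phi(x)$ with $\phi$ real analytic on $(X_1,X_2)$, {\bf (V1)} gives $V>0$ away from $X_0$, hence $\phi>0$ on all of $(X_1,X_2)$, while $\phi(X_0)=\tfrac12 V''(X_0)>0$ by {\bf (V2)}; thus $\Sigma(x):=(x-X_0)\sqrt{\phi(x)}$ is real analytic on $(X_1,X_2)$, satisfies $\Sigma^2=V$, has $\Sigma'>0$ everywhere (at $X_0$ by {\bf (V2)}, elsewhere from $2\Sigma\Sigma'=V'\ne0$), and maps $(X_1,X_2)$ onto $\mathbb{R}$ since $V\to\infty$ at both endpoints. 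Hence $\psi=\Sigma^{-1}$ and $g(s)=\psi'(s)+\psi'(-s)$ are real analytic on all of $\mathbb{R}$, extend holomorphically to a neighbourhood of any compact interval $[0,M]$, and your Morera argument then gives holomorphy of $T$ in a complex disk around every $E_0>0$. With this repair your proof is correct and genuinely different from the paper's: there we keep the square-root singularity and instead extend the inverse branch $U_2$ of $V$ holomorphically to an angular sector at the origin (Lemmas \ref{lem_holo}--\ref{lem_U22}), paying with the estimates $|U_2'(y)|\le C|y|^{-1/2}$, $|U_2''(y)|\le C|y|^{-3/2}$; your global analytic Morse coordinate removes the turning-point singularity altogether (note $\psi'(s)=2sU_2'(s^2)$, so the two representations agree) and reduces everything to differentiation under an integral with the fixed weight $(1-\sigma^2)^{-1/2}$, which is arguably simpler.
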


\subsection{Proof of Theorem \ref{thm_ana}}

To prove Theorem \ref{thm_ana}, we first decompose $T(E)$ into the left and right contributions
\begin{equation}
\frac{1}{2}T(E) = T_1(E)+T_2(E) := \int_{x_1(E)}^{X_0} \frac{1}{\sqrt{2(E - V(x))}}\rd{x} + \int_{X_0}^{x_2(E)} \frac{1}{\sqrt{2(E - V(x))}}\rd{x}.
\end{equation}
By symmetry, it suffices to prove the real analytic property of $T_2(E)$. Denote the inverse functions of $V$ on $(X_1,X_0)$ and $(X_0,X_2)$ by $U_1$ and $U_2$ respectively. In the expression of $T_2$, we use a change of variable $y=V(x)/E$ to obtain
\begin{equation}\label{T2}
T_2(E) = \int_0^1 \frac{1}{\sqrt{2(E -  E y)}}E U_2'(E y)\rd{y} = \sqrt{\frac{E}{2}} \int_0^1 \frac{1}{\sqrt{1 -  y}}U_2'(E y)\rd{y}.
\end{equation}
This allows us to take derivative with respect to $E$ and get
\begin{equation}\label{dT2}\begin{split}
T_2'(E)  = & \frac{1}{2\sqrt{2}\sqrt{E}}\Big(\int_0^1 \frac{1}{\sqrt{1 -  y}}U_2'(E y)\rd{y} +  2E \int_0^1 \frac{1}{\sqrt{1 -  y}}y U_2''(E y)\rd{y}\Big) \\
= & \frac{1}{2\sqrt{2}\sqrt{E}}\int_0^1 \frac{1}{\sqrt{1 -  y}}\Big(U_2'(E y) +  2E y U_2''(E y)\Big)\rd{y} \\
= & \frac{1}{2\sqrt{2}E^{3/2}}\int_0^E \frac{1}{\sqrt{1 -  y/E}}\Big(U_2'(y) +  2y U_2''(y)\Big)\rd{y}
\end{split}\end{equation}
Notice that the last integral is absolutely convergent near $y=0$ because $U_2(y)-X_0\sim \sqrt{y}$, $U_2'(y)\sim y^{-1/2}$,  $U_2''(y)\sim -y^{-3/2}$. This computation shows that $T(E)$ is at least differentiable.

The proof of Theorem \ref{thm_ana} is based on the formula \eqref{T2}, which can be extended to certain \emph{complex values} of $E$. For this purpose, we need to have a holomorphic extension of $U_2$, which makes the quantity $U_2'(E y)$ well-defined for complex $E$ and $y\in (0,1)$. Then the holomorphic property of $T(E)$ can be easily obtained by showing that \eqref{dT2} is also valid for complex $E$.

The holomorphic extension of $U_2$ has to be constructed very carefully because $U_2(E)$ cannot be extended to negative values of $E$, but the usage of $U_2'(E y)$ does need the value of $U_2'$ on a ray emanating from the origin. The original real function $V$ maps $(X_0,X_2)$ to $(0,\infty)$, and $U_2$ maps $(0,\infty)$ to $(X_0,X_2)$. Therefore, our strategy is to construct the holomorphic extension of $V$  in an angle-shaped region in $\mathbb{C}$ containing the interval $(X_0,X_2)$, and then show that one can invert the extended $V$ and obtain an extension of $U_2$ on a similar angle-shaped region. Such region contains some rays emanating from the origin.

For $\epsilon>0$ small and $0<R<X_2-X_0$, denote the open region
\begin{equation}
D_{\epsilon,R} = \{z\in\mathbb{C}: 0<|z-X_0|<R,\,|\text{Arg}(z-X_0)|<\epsilon\}
\end{equation}
which will serve as the domain of a holomorphic extension of $V$. See Figure \ref{fig2} as illustration.

\begin{figure}
    \centering
    \includegraphics[width=0.99\textwidth]{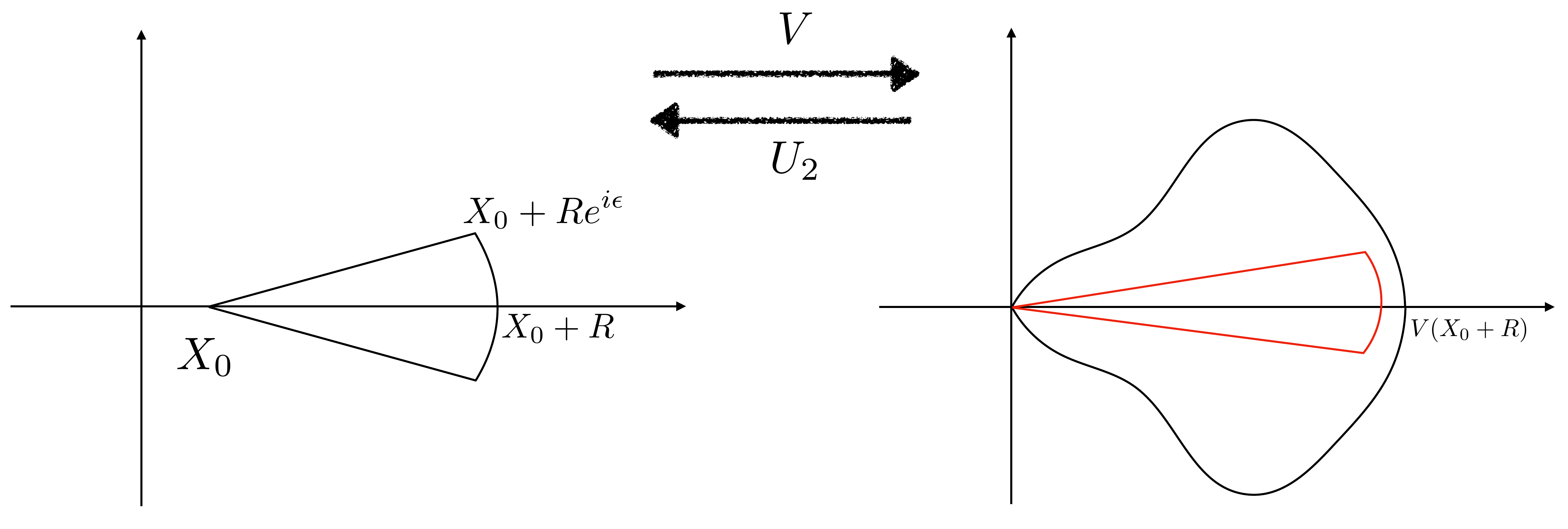}
    \caption{Left: the region $D_{\epsilon,R}$. It is mapped to the black region on the right. Right: the red region is $\tilde{D}_{\tilde{\epsilon},\tilde{R}}$, on which $U_2$ has a holomorphic extension given by Lemma \ref{lem_U21}.}
    \label{fig2}
\end{figure}

\begin{lemma}\label{lem_holo}
Assume {\bf (V1)}-{\bf (V3)}. Then for any $0<R<X_2-X_0$, there exists $\epsilon>0$ such that $V$ has a holomorphic extension to $D_{\epsilon,R}$, satisfying $V'(z)\ne 0$ on $D_{\epsilon,R}$, and one-to-one on $D_{\epsilon,R}$.
\end{lemma}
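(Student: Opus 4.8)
The plan is to construct the holomorphic extension of $V$ first on a disk and then restrict and reshape the domain to the angular sector $D_{\epsilon,R}$, while tracking the non-vanishing of $V'$ and injectivity. By {\bf (V3)}, $V$ is real analytic on $(X_1,X_2)$, so around each point $x_0\in (X_0,X_2)$ it extends holomorphically to some complex disk $B(x_0,\delta(x_0))$; by compactness of $[X_0+\eta, X_0+R]$ for small $\eta$, and a separate argument near $X_0$, one obtains a single holomorphic extension (still called $V$) to an open neighborhood $\Omega$ in $\mathbb{C}$ of the closed interval $[X_0, X_0+R]$. I would then shrink $\Omega$ to a thin neighborhood of the interval: for each $x\in [X_0,X_0+R]$ there is a vertical (or disk) slab of some height $h(x)>0$ inside $\Omega$, and $\inf h>0$ by compactness, so $\Omega$ contains a rectangle $(X_0-\eta, X_0+R+\eta)\times(-h,h)$. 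The sector $D_{\epsilon,R}$ with $\epsilon$ small enough is contained in such a rectangle, since near the vertex $X_0$ the sector is pinched; this fixes the geometry, and it remains to verify the two analytic conditions on $D_{\epsilon,R}$.

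For the non-vanishing $V'(z)\neq 0$: on the real interval $(X_0,X_2)$ we have $V'>0$ by {\bf (V1)}, and $V'(X_0)=0$ but $V''(X_0)>0$ by {\bf (V2)}, so $V'$ has a simple zero at $X_0$ and no other zero on $[X_0,X_0+R]$. Since $V'$ is holomorphic and $[X_0,X_0+R]$ is compact, $V'$ is bounded below in modulus on the portion of the interval away from $X_0$, and by continuity $|V'(z)|$ stays bounded away from $0$ on a neighborhood of that portion; near $X_0$ we write $V'(z)=(z-X_0)g(z)$ with $g$ holomorphic and $g(X_0)=V''(X_0)>0$, hence $g\neq 0$ on a small disk around $X_0$, so $V'(z)=0$ on that disk only at $z=X_0$, which is excluded from $D_{\epsilon,R}$ by the condition $|z-X_0|>0$. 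Combining, for $\epsilon$ (and possibly $R$, but $R$ is prescribed so instead we only shrink $\epsilon$) small enough, $V'\neq 0$ on all of $D_{\epsilon,R}$.

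For injectivity: on the real interval $V$ is strictly increasing, hence one-to-one there. The standard way to upgrade this to a complex neighborhood is a compactness/uniform-continuity argument: suppose not, then there are sequences $z_n\neq w_n$ in shrinking sectors with $V(z_n)=V(w_n)$; passing to subsequences $z_n,w_n\to$ points on $[X_0,X_0+R]$, and by injectivity on the real line together with $V'\neq 0$ (local injectivity via the inverse function theorem in $\mathbb{C}$, valid once $\epsilon$ is small so $V'\neq 0$), the limit points must coincide, say both equal $x^*$. If $x^*\neq X_0$, local injectivity of $V$ near $x^*$ (guaranteed by $V'(x^*)\neq 0$) contradicts $z_n\neq w_n$, $V(z_n)=V(w_n)$ for large $n$. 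If $x^*=X_0$, one uses the expansion $V(z)-V(X_0)=\tfrac12 V''(X_0)(z-X_0)^2(1+o(1))$; the map $z\mapsto (z-X_0)^2$ is injective on any sector of opening $<\pi$ not containing $X_0$, and the correction $(1+o(1))$ does not destroy injectivity on a sufficiently small sub-sector — this is where one invokes that $D_{\epsilon,R}$ has opening $2\epsilon<\pi$. So again a contradiction, and $V$ is one-to-one on $D_{\epsilon,R}$ for $\epsilon$ small.

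The main obstacle is the behavior near the vertex $X_0$: this is simultaneously where $V'$ vanishes (so the map is two-to-one in a full neighborhood), where the sector degenerates, and where the subsequent change of variables $y=V(x)/E$ has its singularity. The resolution in all three sub-arguments is the same local normal form $V(z)=V(X_0)+\tfrac12 V''(X_0)(z-X_0)^2(1+o(1))$ coming from {\bf (V2)}, combined with the observation that restricting to $\{|\mathrm{Arg}(z-X_0)|<\epsilon\}$ with $\epsilon$ small keeps us in a half-plane-like region where $(z-X_0)^2$ is injective and nonzero; I expect the bookkeeping to show that a single choice of $\epsilon=\epsilon(R)$ works for all three conditions, and I would state the lemma's proof by first fixing $\epsilon$ small enough that $D_{\epsilon,R}\subset\Omega$ and $V'\neq 0$ there, then deriving injectivity on that sector (shrinking $\epsilon$ once more if needed).
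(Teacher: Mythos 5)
Your overall strategy is viable and is organized differently from the paper's. The paper proves injectivity constructively in three steps: an explicit square-root factorization near the vertex $X_0$, then injectivity on a thin tube $B([X_0+\epsilon_1,X_0+R];\epsilon_2)$ via $\Re(V'(z))\ge c_1/2$ and the identity $V(z_1)-V(z_0)=(z_1-z_0)\int_0^1V'((1-t)z_0+tz_1)\rd t$, and finally a gluing step separating values ($|V|<c/2$ near the vertex versus $|V|>c$ away from it). You instead run a compactness/contradiction argument with $\epsilon=1/n\to0$, which reduces everything to two local statements: local injectivity at interior points where $V'\ne 0$, and injectivity near the vertex. This buys you the elimination of the gluing step and of the quantitative tube construction, at the price of being non-quantitative in $\epsilon$; your treatment of the extension itself and of $V'\ne0$ (factoring $V'(z)=(z-X_0)g(z)$ with $g(X_0)=V''(X_0)>0$) matches the paper in substance.

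The one place where your write-up is not yet a proof is precisely the crux of the lemma, your case $x^*=X_0$: the assertion that the $(1+o(1))$ factor ``does not destroy injectivity'' of $(z-X_0)^2$ on a narrow sector is exactly what has to be proved, and injectivity of $z\mapsto z^2$ on sectors of opening $<\pi$ is not by itself stable under multiplicative perturbations — some use of holomorphy is needed. The standard fix, which is the paper's STEP 1, is to write $V(z)=\phi(z)(z-X_0)^2$ with $\phi(X_0)=\tfrac12 V''(X_0)>0$, choose a holomorphic square root $\psi$ with $\psi^2=\phi$, $\psi(X_0)>0$, and set $h(z)=(z-X_0)\psi(z)$, so that $V=h^2$. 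Then $h$ is injective on a small disk around $X_0$ (its derivative at $X_0$ is $\psi(X_0)\ne0$), and for $z$ in the sector $|\mathrm{Arg}(z-X_0)|<\epsilon$ one has $|\mathrm{Arg}\,h(z)|\le C\epsilon<\pi/2$, so $V(z_n)=V(w_n)$ forces $h(z_n)=+h(w_n)$ rather than $-h(w_n)$, hence $z_n=w_n$, the desired contradiction. With this insertion your argument is complete; without it, the key step is asserted rather than proved.
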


\begin{proof}
{\bf (V3)} implies that $V$ has a holomorphic extension to $D_{\epsilon,R}\cup B(X_0;\epsilon)$ as long as $\epsilon$ is sufficiently small. Since $V'(X_0)=0$ by {\bf (V1)} and the zeros of the holomorphic function $V'(z)$ are isolated, we see that $V'(z)\ne 0$ for $z\in B(X_0;\epsilon)\backslash \{X_0\}$ as long as $\epsilon$ is sufficiently small. Since $V'(x)>0$ for $x\in [X_0+\epsilon/2,R]$ by  {\bf (V1)}, we see that the same is true in a complex neighborhood of $[X_0+\epsilon/2,R]$. Therefore $V'(z)\ne 0$ in $D_{\epsilon,R}$ up to choosing a smaller $\epsilon$.

{\bf STEP 1}: We first show that $V$ is one-to-one on $D_{\epsilon,R}\cap B(X_0;\epsilon)$ if $\epsilon$ is sufficiently small. In fact, if $z\in B(X_0;\epsilon)$, then by {\bf (V2)} we have 
\begin{equation}
V(z) = \phi(z)(z-X_0)^2,\quad \phi(X_0) =  \frac{1}{2}V''(X_0) > 0
\end{equation}
where $\phi$ is holomorphic, since $V(X_0)=V'(X_0)=0$. Therefore, if $\epsilon$ is sufficiently small, there exists another holomorphic function $\psi$ defined on $B(X_0;\epsilon)$ such that $\psi^2=\phi$ with $\psi(X_0)=\sqrt{\frac{1}{2}V''(X_0)} > 0$.

The map $z\mapsto \psi(z)(z-X_0)$ is clearly one-to-one on $B(X_0;\epsilon)$ by the implicit function theorem. $\psi(B(X_0;\epsilon))$, the image of  $B(X_0;\epsilon)$ under $\psi$, is a neighborhood of the positive real number $\sqrt{\frac{1}{2}V''(X_0)}$ in $\mathbb{C}$. If $\epsilon$ is sufficiently small, then any element  $y\in\psi(B(X_0;\epsilon)\cap D_{\epsilon,R})$ has $|\text{Arg}(y)|\le C\epsilon$ by continuity since the set $\psi(B(X_0;\epsilon)\cap D_{\epsilon,R})$ is bounded away from zero and inside a small neighborhood of $\sqrt{\frac{1}{2}V''(X_0)}$. This implies that the image of $\psi(B(X_0;\epsilon)\cap D_{\epsilon,R})$ under the map $z\mapsto \psi(z)(z-X_0)$ lies in $\{y\in \mathbb{C}: |\text{Arg}(y)|\le C\epsilon\}$. Therefore the map $V$, as the composition of this map with $y\mapsto y^2$, is one-to-one on $D_{\epsilon,R}\cap B(X_0;\epsilon)$ if $\epsilon$ is sufficiently small.

{\bf STEP 2}: We then show that  for any given small $\epsilon_1>0$,  $V$ is one-to-one on $B([X_0+\epsilon_1,X_0+R];\epsilon_2):= \{z\in\mathbb{C}:\dist(z,[X_0+\epsilon_1,X_0+R])<\epsilon_2\}$ for some $0<\epsilon_2=\epsilon_2(\epsilon_1) \le \epsilon_1$.

To see this, we first notice that there exists $c_1>0$ such that $V'(x)\ge c_1$ for any $x\in [X_0+\epsilon_1,X_0+R]$. Therefore, in $B([X_0+\epsilon_1,X_0+R];\epsilon_2)$ for small $\epsilon_2>0$, we have $\Re(V'(z))\ge c_1/2$. For any distinct numbers $z_0,z_1\in B([X_0+\epsilon_1,X_0+R];\epsilon_2)$, we then have
\begin{equation}
V(z_1)-V(z_0) = \int_0^1\frac{\rd}{\rd{t}}V((1-t)z_0+t z_1)\rd{t} = (z_1-z_0)\int_0^1V'((1-t)z_0+t z_1)\rd{t}\ne 0
\end{equation}
since the last integral has a real part at least $c_1/2$. This shows that $V$ is one-to-one on $B([X_0+\epsilon_1,X_0+R];\epsilon_2)$.

\begin{figure}
    \centering
    \includegraphics[width=0.8\textwidth]{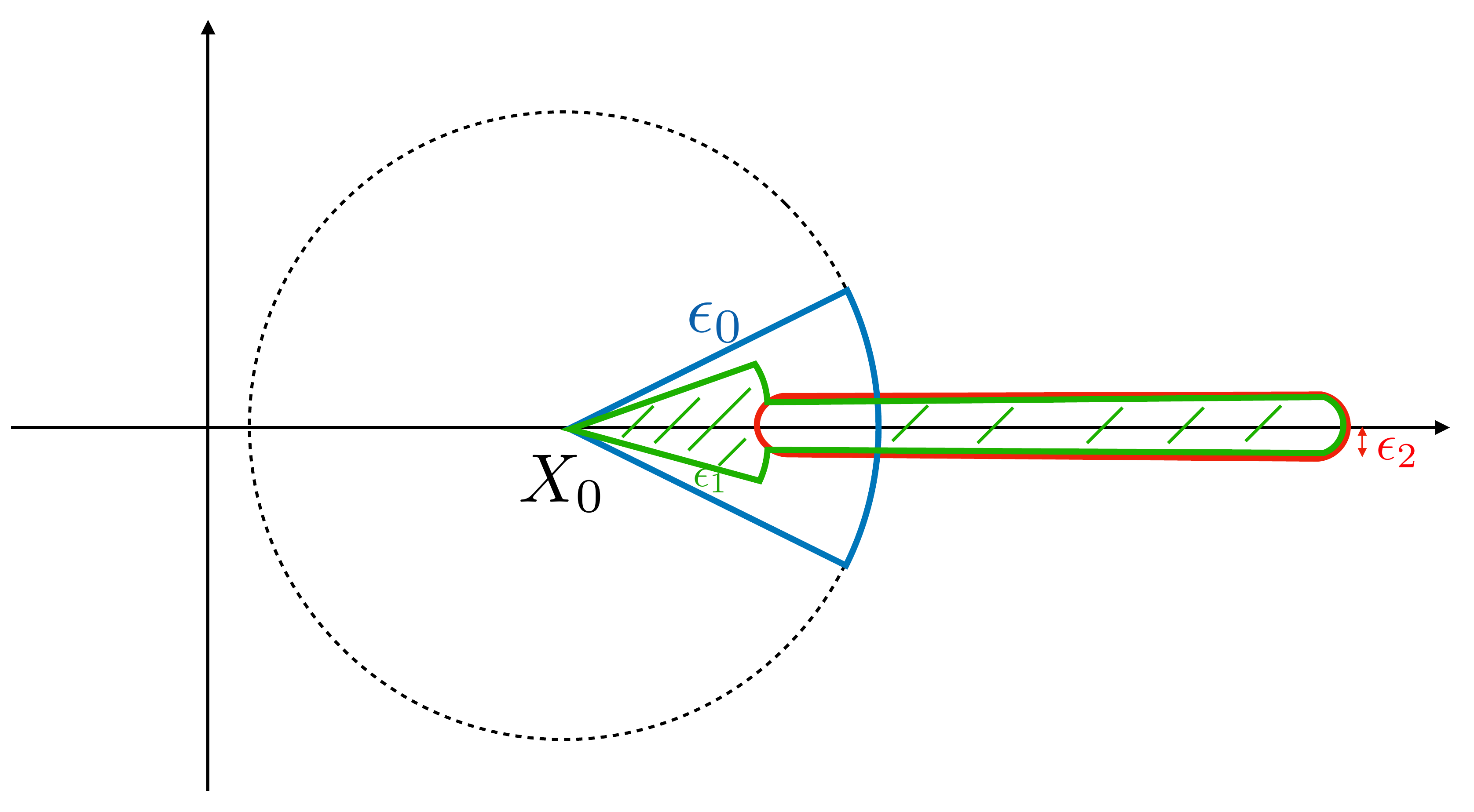}
    \caption{STEP 3 of the proof of Lemma \ref{lem_holo}. The blue region is $D_{\epsilon_0,R}\cap B(X_0;\epsilon_0)$, and the red region is $B([X_0+\epsilon_1,X_0+R];\epsilon_2)$. We know from previous steps that $V$ is one-to-one on either region. To see that it is one-to-one on the green region, it suffices to show that the two shaded regions do not give identical $V$ values.}
    \label{fig3}
\end{figure}

{\bf STEP 3}: Glue the two regions, see Figure \ref{fig3} as illustration. We take $\epsilon_0$ so that $V$ is one-to-one on $D_{\epsilon_0,R}\cap B(X_0;\epsilon_0)$ as in STEP 1, take $\epsilon_1<\epsilon_0$ to be determined, and then choose $\epsilon_2=\epsilon_2(\epsilon_1)$ according to STEP 2. Since $V(x)>0$ for real inputs $x\in [X_0+\epsilon_0,X_0+R]$, we may further require $\epsilon_2$ to be sufficiently small so that $|V(z)|> c$ for any $z\in B([X_0+\epsilon_1,X_0+R];\epsilon_2)\backslash (D_{\epsilon_0,R}\cap B(X_0;\epsilon_0))$, with $c>0$ independent of $\epsilon_1$.

We claim that for sufficiently small $\epsilon_1$, $V$ is one-to-one on $(D_{\epsilon_1,R}\cap B(X_0;\epsilon_1))\cup B([X_0+\epsilon_1,X_0+R];\epsilon_2)$, which would finish the proof. To prove the claim, since we already know that $V$ is one-to-one on $D_{\epsilon_0,R}\cap B(X_0;\epsilon_0)$ and $B([X_0+\epsilon_1,X_0+R];\epsilon_2)$, it suffices to show that for any $z_1\in D_{\epsilon_1,R}\cap B(X_0;\epsilon_1)$ and $z_2\in B([X_0+\epsilon_1,X_0+R];\epsilon_2)\backslash (D_{\epsilon_0,R}\cap B(X_0;\epsilon_0))$ we have $V(z_1)\ne V(z_2)$. This is clear since $|V(z_2)|>c$ by the choice of $\epsilon_2$, but $|z_1-X_0|<\epsilon_1$, and thus $|V(z_1)|=|V(z_1)-V(X_0)| < c/2$ if $\epsilon_1$ is sufficiently small.

\end{proof}

Therefore $V$ is invertible on $D_{\epsilon,R}$ and we denote its inverse as $U_2$, which is also a holomorphic function with non-vanishing derivative. Since $V$ maps the interval $(X_0,X_0+R)$ to $(0,V(X_0+R))$ and behaves like $\frac{V''(X_0)}{2}(z-X_0)^2$ for $z$ near $X_0$, we see that  for any $\epsilon_1>0$, the domain of $U_2$ contains a set of the form $\tilde{D}_{\epsilon_2,V(X_0+R)-\epsilon_1}$ for sufficiently small $\epsilon_2>0$, where
\begin{equation}\label{U2domain}
\tilde{D}_{\tilde{\epsilon},\tilde{R}} := \{y\in\mathbb{C}: 0<|y|<\tilde{R},\,|\text{Arg}(y)|<\tilde{\epsilon}\}
\end{equation}
see Figure \ref{fig2}. Recall that due to {\bf (V1)} we have $V(X_2^-)=\infty$. Since $0<R<X_2-X_0$ can be taken as arbitrarily close to $X_2-X_0$, we obtain the following conclusion. 

\begin{lemma}\label{lem_U21}
For any $\tilde{R}>0$, there exists $\tilde{\epsilon}>0$ such that $U_2$ has a holomorphic extension from the real interval $(0,\tilde{R})$ to $\tilde{D}_{\tilde{\epsilon},\tilde{R}}$.
\end{lemma}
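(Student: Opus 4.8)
The plan is to invert the holomorphic extension of $V$ supplied by Lemma \ref{lem_holo} and then pin down the shape of its image. Fix $\tilde{R}>0$. Since $V(X_2^-)=\infty$ by {\bf (V1)}, choose $0<R<X_2-X_0$ with $V(X_0+R)>\tilde{R}$, and apply Lemma \ref{lem_holo} to obtain $\epsilon>0$ for which $V$ is holomorphic, one-to-one, and has non-vanishing derivative on $D_{\epsilon,R}$. By the open mapping theorem $\Omega:=V(D_{\epsilon,R})$ is open and $V:D_{\epsilon,R}\to\Omega$ is a biholomorphism; its inverse is holomorphic, and since $V$ maps the real segment $(X_0,X_0+R)$ monotonically onto $(0,V(X_0+R))$ (as $V'>0$ there), this inverse restricts on $(0,V(X_0+R))\supset(0,\tilde{R})$ to the real function $U_2$. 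Thus it is the sought extension of $U_2$, and it remains only to check that $\tilde{D}_{\tilde{\epsilon},\tilde{R}}\subset\Omega$ for some $\tilde{\epsilon}>0$.

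First I would describe $\Omega$ near the origin. Writing $V(z)=\phi(z)(z-X_0)^2$ with $\phi$ holomorphic near $X_0$ and $\phi(X_0)=\tfrac12 V''(X_0)>0$ --- exactly as in STEP 1 of the proof of Lemma \ref{lem_holo} --- pick a holomorphic square root $\psi$ of $\phi$ near $X_0$ with $\psi(X_0)>0$, so that $V(z)=\bigl(\psi(z)(z-X_0)\bigr)^2$. By continuity of $\psi$ and the implicit function theorem, for a sufficiently small sector $D_{\epsilon,\rho}$ the map $z\mapsto\psi(z)(z-X_0)$ is one-to-one, sends $D_{\epsilon,\rho}$ into the cone $\{|\text{Arg}\,y|\le C\epsilon\}$, and has image containing a genuine sub-sector $\{0<|y|<\eta,\ |\text{Arg}\,y|<\epsilon'\}$; squaring then shows $\Omega$ contains a sector $\tilde{D}_{\epsilon_2,\delta}$ for some $\epsilon_2,\delta>0$.

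Next I would patch this sector to the remainder of the interval. The segment $[\delta/2,\tilde{R}]$ is a compact subset of the open set $\Omega\cap(0,\infty)$, hence has a uniform open neighbourhood of some width $w>0$ contained in $\Omega$; such a tube around $[\delta/2,\tilde{R}]$ contains $\{y:\delta/2<|y|<\tilde{R},\ |\text{Arg}\,y|<\epsilon''\}$ once $\epsilon''$ is small enough (depending on $w$, $\delta$ and $\tilde{R}$). Together with the near-origin sector $\tilde{D}_{\epsilon_2,\delta}$, which covers $|y|<\delta$, this yields $\tilde{D}_{\tilde{\epsilon},\tilde{R}}\subset\Omega$ with $\tilde{\epsilon}:=\min(\epsilon_2,\epsilon'')$, completing the argument.

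The only genuinely delicate step is the behaviour of $\Omega$ near $0$: one must exploit the exact quadratic vanishing $V(z)-V(X_0)\sim\tfrac12 V''(X_0)(z-X_0)^2$, together with the positivity of the leading coefficient, to ensure the image is an honest sector symmetric about $\mathbb{R}_{>0}$ (arising from a squaring map) rather than merely some open set containing $0$, and to control the angular distortion introduced by $\psi$ by $O(\epsilon)$. Everything else is a routine openness-and-compactness argument, and the arbitrariness of $\tilde{R}$ is absorbed entirely into the choice $V(X_0+R)>\tilde{R}$, which is legitimate because $V(X_0+R)\to\infty$ as $R\to(X_2-X_0)^-$.
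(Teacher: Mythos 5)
Your argument is correct and follows essentially the same route as the paper: the paper also obtains $U_2$ by inverting the extension from Lemma \ref{lem_holo}, observes that the quadratic behaviour $V(z)\sim\frac{V''(X_0)}{2}(z-X_0)^2$ forces the image of $D_{\epsilon,R}$ to contain a sector $\tilde{D}_{\epsilon_2,V(X_0+R)-\epsilon_1}$, and then lets $R\to (X_2-X_0)^-$ using $V(X_2^-)=\infty$ to reach any $\tilde{R}$. You merely spell out the sector-image and tube/compactness details that the paper leaves implicit.
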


Then we estimate the derivatives of the holomorphic function $U_2$.
\begin{lemma}\label{lem_U22}
Fix $\tilde{R}>0$, and let $\tilde{\epsilon}$ and the holomorphic function $U_2$ be as in the previous lemma. Then $U_2$ satisfies the estimates
\begin{equation}
|U_2'(y)| \le C|y|^{-1/2},\quad |U_2''(y)| \le C|y|^{-3/2}
\end{equation}
for any $y\in \tilde{D}_{\tilde{\epsilon},\tilde{R}}$, with $C$ possibly depending on $\tilde{R}$.
\end{lemma}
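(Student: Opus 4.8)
The plan is to split the region $\tilde D_{\tilde\epsilon,\tilde R}$ into a small disc $\{|y|<\delta\}$ about the origin, where the singular behaviour of $U_2$ is captured by an explicit square root, and the complementary compact piece $K:=\overline{\tilde D_{\tilde\epsilon,\tilde R}}\cap\{|y|\ge\delta\}$, where $U_2$ is merely a bounded holomorphic function; fix a small $\delta>0$ once and for all. Near the origin I would reuse the factorization from the first step of the proof of Lemma~\ref{lem_holo}: on $B(X_0;\epsilon)$ one has $V(z)=\psi(z)^2(z-X_0)^2$ with $\psi$ holomorphic and $\psi(X_0)=\sqrt{\tfrac12 V''(X_0)}>0$, so that, setting $g(z):=\psi(z)(z-X_0)$, the map $g$ is a biholomorphism from $B(X_0;\epsilon)$ onto a neighbourhood of $0$ with $g(X_0)=0$, $g'(X_0)=\psi(X_0)\ne0$ and $V=g^2$; moreover $g$ sends the real segment $(X_0,X_0+\epsilon)$ into $(0,\infty)$, hence $g^{-1}$ sends a real interval $(0,\delta_0)$ back into $(X_0,X_0+\epsilon)$. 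For small $t>0$ one has $U_2(V(X_0+t))=X_0+t=g^{-1}(\sqrt{V(X_0+t)})$ with the principal branch of the square root, so by uniqueness of holomorphic continuation
\begin{equation}
U_2(y)=g^{-1}\big(\sqrt{y}\,\big),\qquad y\in\tilde D_{\tilde\epsilon,\tilde R},\ |y|<\delta,
\end{equation}
provided $\tilde\epsilon$ is small enough that $\sqrt y$ stays in the neighbourhood of $0$ on which $g^{-1}$ is holomorphic and $g^{-1}(\sqrt y)$ stays in $D_{\epsilon,R}$, so that the right-hand side indeed coincides with the inverse $V^{-1}$ of Lemma~\ref{lem_holo}.

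From this identity one differentiates:
\begin{equation}
U_2'(y)=\frac{(g^{-1})'(\sqrt y)}{2\sqrt y},\qquad
U_2''(y)=\frac{(g^{-1})''(\sqrt y)}{4y}-\frac{(g^{-1})'(\sqrt y)}{4\,y^{3/2}}.
\end{equation}
Since $g^{-1}$ is holomorphic near $0$, the factors $(g^{-1})'(\sqrt y)$ and $(g^{-1})''(\sqrt y)$ are bounded for $|y|<\delta$, and $|y|^{-1}=|y|^{1/2}\,|y|^{-3/2}\le\delta^{1/2}|y|^{-3/2}$ on that disc; hence $|U_2'(y)|\le C|y|^{-1/2}$ and $|U_2''(y)|\le C|y|^{-3/2}$ for $y\in\tilde D_{\tilde\epsilon,\tilde R}$ with $|y|<\delta$.

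On the compact set $K$ I would argue by compactness: $U_2$ is the inverse of the injective holomorphic map $V\colon D_{\epsilon,R}\to V(D_{\epsilon,R})$ of Lemma~\ref{lem_holo}, hence holomorphic on the open set $V(D_{\epsilon,R})$, which contains $\tilde D_{\tilde\epsilon,\tilde R}$; since all the $\epsilon$-parameters in the proofs of Lemmas~\ref{lem_holo}--\ref{lem_U21} were chosen with room to spare (in particular $R$ may be taken close to $X_2-X_0$ and $V(X_2^-)=\infty$), after possibly shrinking $\tilde\epsilon$ the set $K$ lies inside $V(D_{\epsilon,R})$, so the Cauchy estimates give $|U_2'|,|U_2''|\le C(\delta)$ on $K$. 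As $|y|^{-1/2}\ge\tilde R^{-1/2}$ and $|y|^{-3/2}\ge\tilde R^{-3/2}$ on $K$, these bounds fit the claimed form, and combining the two regions — with $\delta$ depending only on $\tilde R,\tilde\epsilon$ — yields the lemma with $C=C(\tilde R)$. The only genuinely delicate point is justifying the identity $U_2(y)=g^{-1}(\sqrt y)$ near the origin: one must verify that for $y$ in the thin angle $\tilde D_{\tilde\epsilon,\tilde R}$ close to $0$ the principal root $\sqrt y$ lands in $g(B(X_0;\epsilon))$ and $g^{-1}(\sqrt y)$ lands back in $D_{\epsilon,R}$, which amounts to tracking the angle-doubling behaviour of $V\approx\tfrac12 V''(X_0)(z-X_0)^2$ near $X_0$ exactly as in the proof of Lemma~\ref{lem_holo}; everything else is the chain rule together with a routine compactness/Cauchy-estimate argument.
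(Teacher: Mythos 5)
Your proposal is correct, but the key step is carried out by a different mechanism than in the paper. The paper works directly with the inverse-function derivative formulas $U_2'(y)=1/V'(z)$ and $U_2''(y)=-V''(z)/(V'(z))^3$ with $z=U_2(y)$, and then reads the estimates off the Taylor expansion of $V$ at $X_0$: from $V(z)=\tfrac12 V''(X_0)(z-X_0)^2+\dots$ one gets $|y|\le C|z-X_0|^2$, $|V'(z)|\ge c|z-X_0|$ and $V''$ bounded, which immediately give $|U_2'|\le C|y|^{-1/2}$ and $|U_2''|\le C|y|^{-3/2}$ (the region away from the origin being dismissed by ``we may assume $|y|$ small''). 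You instead reuse the square-root factorization $V=g^2$, $g(z)=\psi(z)(z-X_0)$, from Step 1 of Lemma \ref{lem_holo} to obtain the explicit representation $U_2(y)=g^{-1}(\sqrt y)$ near the origin (justified on the real axis and propagated by the identity theorem), and then differentiate by the chain rule; the compact part you handle by Cauchy estimates after noting the extension has some margin. Both arguments hinge on the same structural fact, the nondegenerate quadratic minimum {\bf (V2)}, and both are complete at the level of rigor of the paper; your identity-theorem step and the ``room to spare'' choice of parameters on the compact piece are the only points requiring the care you already flag, and they do go through. What your route buys is stronger information: $U_2$ is exhibited as a holomorphic function of $\sqrt y$, so you get for free bounds $|U_2^{(k)}(y)|\le C|y|^{1/2-k}$ for all $k$ and even a convergent Puiseux-type expansion, whereas the paper's computation is more elementary (no inversion of $g$, no analytic continuation argument) and yields exactly the two bounds needed for \eqref{T2} and \eqref{dT2}.
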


\begin{proof}
We may take a region $D_{\epsilon,R}$ on which $V$ is one-to-one and $V(D_{\epsilon,R})$ contains $y\in \tilde{D}_{\tilde{\epsilon},\tilde{R}}$. We may assume that $|y|$ is sufficiently small, which is equivalent to the condition that $z=U_2(y)\in D_{\epsilon,R}$ is sufficiently close to $X_0$. We fix such a $y\in \tilde{D}_{\tilde{\epsilon},\tilde{R}}$, and denote $z=U_2(y)$. Then the differentiation rule for inverse functions gives
\begin{equation}
U_2'(y) = \frac{1}{V'(z)},\quad U_2''(y) = -\frac{V''(z)}{(V'(z))^3}\,.
\end{equation}
By {\bf (V1)}-{\bf (V3)}, we have the convergent power series expansion
\begin{equation}
V(z) = \frac{V''(X_0)}{2}(z-X_0)^2 + \sum_{n=3}^\infty a_n(z-X_0)^n,\quad V''(X_0)>0
\end{equation}
in a neighborhood of $X_0$. Note that $|y| = |V(z)| \le C|z-X_0|^2$ in that neighborhood. Moreover, we have
\begin{equation}
V'(z) = V''(X_0)(z-X_0) + \sum_{n=2}^\infty (n+1)a_{n+1}(z-X_0)^n
\end{equation}
and
\begin{equation}
V''(z) = V''(X_0) + \sum_{n=1}^\infty (n+1)(n+2)a_{n+2}(z-X_0)^n
\end{equation}
from which we conclude that $V''(z)$ is bounded near $X_0$, while $|V'(z)| \ge c|z-X_0|$ near $X_0$ since $V''(X_0)>0$.  Therefore we obtain the conclusion.

\end{proof}

\begin{proof}[Proof of Theorem \ref{thm_ana}]
Lemma \ref{lem_U21} gives the holomorphic extension of $U_2$ to a region of the form $\tilde{D}_{\tilde{\epsilon},\tilde{R}}$. Then for any complex number $E\in \tilde{D}_{\tilde{\epsilon},\tilde{R}}$, we may define $T_2(E)$ by the RHS of \eqref{T2} since the input $E y$ for $U_2'$ always lies in $\tilde{D}_{\tilde{\epsilon},\tilde{R}}$ for $y\in (0,1)$, and the integral converges due to Lemma \ref{lem_U22}. This extends the original definition of $T_2(E)$ for $E\in (0,\tilde{R})$, and it is holomorphic because one can take $E$-derivative by \eqref{dT2} due to Lemma \ref{lem_U22}. This shows that the extended $T_2(E)$ is holomorphic in $\tilde{D}_{\tilde{\epsilon},\tilde{R}}$. In particular, $T_2(E)$ is real-analytic on $(0,\tilde{R})$. Since $\tilde{R}$ in Lemma \ref{lem_U21} can be taken arbitrarily large, we see that $T_2(E)$ is real-analytic on $(0,\infty)$. Since $T_1(E)$ can be treated similarly, we see that $T(E)$ is real-analytic on $(0,\infty)$.
\end{proof}

\subsection{Local expansion for $T(E)$ near $E=0$}

In this subsection we give a sufficient condition which guarantees that $T(E)$ is non-constant. This is based a local expansion for $T(E)$ near $E=0$, combined with Theorem \ref{thm_ana}. Then we apply this theory to the potential arising from the Euler-Poisson system.

\begin{lemma}\label{lem_Texpan}
Assume $V$ satisfies {\bf (V1)}-{\bf (V3)}. Then for small $E>0$ we have
\begin{equation}\label{lem_Texpan_1}
T'(E) = \frac{\pi c_V}{(V''(X_0))^{7/2}} + O(E^{1/2}),
\end{equation}
where 
\begin{equation}
c_V := -\frac{1}{4}V''(X_0)V''''(X_0) + \frac{5}{12}(V'''(X_0))^2 .
\end{equation}
As a consequence, if $c_V\ne 0$, then $T(E)$ is non-constant on any sub-interval of $(0,\infty)$.
\end{lemma}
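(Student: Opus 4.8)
The plan is to localize near the potential minimum $X_0$ and compute $T(E)$ for small $E>0$ via the classical square-root substitution that flattens the potential, then read off $T'(0^+)$ and invoke Theorem \ref{thm_ana}. After translating so that $X_0=0$, write $V(x)=\tfrac{a}{2}x^2+\tfrac{b}{6}x^3+\tfrac{c}{24}x^4+\cdots$ with $a=V''(X_0)>0$, $b=V'''(X_0)$, $c=V''''(X_0)$. Since $V$ vanishes to order two at $0$ with $V''(0)>0$, we have $V(x)=x^2g(x)$ with $g$ real analytic and $g(0)=a/2>0$, so $u(x):=\sgn(x)\sqrt{V(x)}=x\sqrt{g(x)}$ is real analytic near $0$ with $u'(0)=\sqrt{a/2}>0$; hence $u$ is an analytic diffeomorphism of a neighbourhood of $0$ onto a neighbourhood of $0$. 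For $E$ small the whole orbit at energy $E$ stays in this neighbourhood, and the turning points $x_1(E)<0<x_2(E)$ are carried to $\mp\sqrt E$. Substituting $x=x(u)$ in \eqref{TE} gives
\begin{equation*}
\tfrac12 T(E)=\int_{-\sqrt E}^{\sqrt E}\frac{x'(u)}{\sqrt{2(E-u^2)}}\,\rd u .
\end{equation*}

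Next I would expand the real-analytic function $x'(u)=\sum_{k\ge 0}c_k u^k$; the odd powers drop out by symmetry of the integral, and the substitution $u=\sqrt E\sin\theta$ yields $\int_{-\sqrt E}^{\sqrt E} u^{2j}\,(2(E-u^2))^{-1/2}\rd u=\tfrac{\pi}{\sqrt 2}\,\tfrac{(2j)!}{4^j(j!)^2}\,E^j$. Since $|c_{2j}|\le C\rho^{-2j}$ for any $\rho$ below the radius of convergence of $x'$ and $\tfrac{(2j)!}{4^j(j!)^2}\le 1$, the resulting series $\tfrac12 T(E)=\tfrac{\pi}{\sqrt 2}\sum_{j\ge0}c_{2j}\tfrac{(2j)!}{4^j(j!)^2}E^j$ converges for $0\le E<\rho^2$, so $T$ extends real-analytically to $E=0$ and may be differentiated termwise, giving $T'(E)=\tfrac{\pi}{\sqrt2}\,c_2+O(E)$ — the asserted expansion, with remainder $O(E^{1/2})$ (in fact $O(E)$). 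Alternatively one may simply use the differentiability of $T$ already established via \eqref{dT2} and pass to the limit $E\to 0^+$.

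It then remains to compute $c_2$. Expanding the square root of $1+\tfrac{b}{3a}x+\tfrac{c}{12a}x^2+\cdots$ gives $u(x)=\sqrt{a/2}\,\big(x+px^2+qx^3+\cdots\big)$ with $p=\tfrac{b}{6a}$ and $q=\tfrac{c}{24a}-\tfrac{b^2}{72a^2}$. Inverting this power series yields $x(u)=\lambda^{-1}u-p\lambda^{-2}u^2+(2p^2-q)\lambda^{-3}u^3+\cdots$ with $\lambda=\sqrt{a/2}$, so $c_2=3(2p^2-q)\lambda^{-3}$. A short computation gives $2p^2-q=\tfrac{5b^2-3ac}{72a^2}$ and $\lambda^{-3}=2\sqrt2\,a^{-3/2}$, hence $c_2=\tfrac{\sqrt2\,(5b^2-3ac)}{12\,a^{7/2}}$. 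Since $c_V=-\tfrac14 ac+\tfrac{5}{12}b^2=\tfrac{5b^2-3ac}{12}$, this gives $T'(E)=\tfrac{\pi}{\sqrt2}c_2+O(E)=\tfrac{\pi c_V}{(V''(X_0))^{7/2}}+O(E^{1/2})$, which is \eqref{lem_Texpan_1}.

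Finally, if $c_V\ne 0$ then $T'(E)\to\tfrac{\pi c_V}{(V''(X_0))^{7/2}}\ne 0$ as $E\to0^+$, so $T'$ does not vanish on some interval $(0,\delta)$ and $T$ is non-constant there; by Theorem \ref{thm_ana}, $T$ is real analytic on $(0,\infty)$, and a real-analytic function that is non-constant on one subinterval cannot be constant on any subinterval, by the identity theorem. I expect the only genuinely delicate point to be the bookkeeping in the middle two paragraphs: justifying the term-by-term integration and termwise $E$-differentiation of the expansion, and carrying the cubic-order series inversion accurately enough to land exactly on the constant $c_V$. Everything else is a routine combination of the period formula \eqref{TE} with Theorem \ref{thm_ana}.
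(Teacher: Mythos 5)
Your proof is correct, and it follows a genuinely different route from the paper for the expansion step. The paper works from the already-differentiated formula \eqref{dT2}: it expresses $T'(E)$ as an integral of $H(x_2)-H(x_1)$ with $H=\bigl((V')^2-2VV''\bigr)/(V')^3$ evaluated at the two turning points, expands $H$ and the turning points to leading order in $y^{1/2}$ near $X_0$, and integrates to extract the constant $\pi c_V/(V''(X_0))^{7/2}$ with an $O(E^{1/2})$ error. You instead use the classical square-root (normal-form) substitution $u=\sgn(x-X_0)\sqrt{V(x)}$, which is an analytic diffeomorphism near $X_0$ by {\bf (V2)}--{\bf (V3)}, turning the period integral into the harmonic-oscillator moments of the analytic density $x'(u)$; this yields a full convergent power series for $T(E)$ in $E$ near $E=0^+$, hence one-sided analyticity at $0$ and in fact a sharper $O(E)$ remainder, at the cost of a cubic-order series inversion to identify $c_2$. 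I checked your bookkeeping: $p=\tfrac{b}{6a}$, $q=\tfrac{c}{24a}-\tfrac{b^2}{72a^2}$, $2p^2-q=\tfrac{5b^2-3ac}{72a^2}$, $c_2=\tfrac{\sqrt{2}(5b^2-3ac)}{12a^{7/2}}$, and $\tfrac{\pi}{\sqrt2}c_2=\tfrac{\pi c_V}{a^{7/2}}$ with $c_V=\tfrac{5b^2-3ac}{12}$, which matches \eqref{lem_Texpan_1}; the termwise integration and differentiation are justified by your geometric bound on the coefficients and the explicit moment formula. The final step — nonvanishing of $T'$ near $0$ plus real analyticity of $T$ on $(0,\infty)$ from Theorem \ref{thm_ana} and the identity theorem — is the same in both arguments. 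In short, the paper's computation is leaner because it only needs leading-order expansions inside \eqref{dT2}, while yours buys the complete local expansion of $T(E)$ at the bottom of the well, which could be reused to study higher-order degeneracies.
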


\begin{proof}
We start from \eqref{dT2}. Denoting $x_2=U_2(y)$ for $y>0$, we have
\begin{equation}
U_2'(y) = \frac{1}{V'(x_2)},\quad U_2''(y) = -\frac{V''(x_2)}{(V'(x_2))^3}
\end{equation}
by implicit differentiation. Therefore the last integrand in \eqref{dT2} is 
\begin{equation}\begin{split}
U_2'(y) +  2y U_2''(y) = \frac{1}{V'(x_2)} - \frac{2V(x_2)V''(x_2)}{(V'(x_2))^3} = \frac{(V'(x_2))^2-2V(x_2)V''(x_2)}{(V'(x_2))^3}:=H(x_2).
\end{split}\end{equation}
Using a reflection about $X_0$, we get a similar formula for $T_1$ as
\begin{equation}\begin{split}
T_1'(E)  =  & \frac{1}{2\sqrt{2}E^{3/2}}\int_0^E \frac{1}{\sqrt{1 -  y/E}}\Big(-U_1'(y) +  2y U_1''(y)\Big)\rd{y}
\end{split}\end{equation}
with
\begin{equation}\begin{split}
-U_1'(y) +  2y U_1''(y) = \frac{(V'(x_1))^2-2V(x_1)V''(x_1)}{-(V'(x_1))^3}=-H(x_1)
\end{split}\end{equation}
where $x_1=U_1(y)$, $U_1$ being the inverse function of $V$ on $(X_1,X_0)$. Here there is an extra negative sign on the denominator because the reflection changes the sign of $V'$. Therefore, since $T'(E)  =  2(T_1'(E)+T_2'(E))$, we obtain the formula
\begin{equation}\label{dTE}\begin{split}
T'(E)  =  \frac{1}{\sqrt{2}E^{3/2}}\int_0^E \frac{1}{\sqrt{1 -  y/E}}\Big(H(x_2)-H(x_1)\Big)\rd{y},
\end{split}\end{equation}
where $x_1<X_0<x_2$ are determined by $V(x_1)=V(x_2)=y$.

Write the Taylor expansion of $V(x)$ near $X_0$ as
\begin{equation}
V(x) = a_2(x-X_0)^2 + a_3(x-X_0)^3 + a_4(x-X_0)^4 + O(|x-X_0|^5),\quad a_2>0
\end{equation}
where $a_k = \frac{V^{(k)}(X_0)}{k!},\,k=2,3,4$. Explicit calculation shows that
\begin{equation}
H(x) = \frac{1}{8 a_2^3}(-4a_2a_3 + (-12a_2a_4+15a_3^2)(x-X_0)) + O(|x-X_0|^2).
\end{equation}
Also, if we take $x_1<X_0<x_2$ with $V(x_1)=V(x_2)=y>0$ small, then 
\begin{equation}
x_{1,2} = X_0 \pm a_2^{-1/2}y^{1/2} + O(y).
\end{equation}
Therefore the integrand in \eqref{dTE} is
\begin{equation}\begin{split}
H(x_2)-H(x_1)
= \frac{1}{8 a_2^3}(-12a_2a_4+15a_3^2)2a_2^{-1/2}y^{1/2} + O(y) = \frac{c_V}{4 a_2^{7/2}} y^{1/2} + O(y).
\end{split}\end{equation}
Therefore the integral on the RHS of \eqref{dTE} is
\begin{equation}\begin{split}
\frac{c_V}{4 a_2^{7/2}}\int_0^E \frac{1}{\sqrt{1 -  y/E}}y^{1/2}\rd{y} + O\Big(\int_0^E \frac{1}{\sqrt{1 -  y/E}}y\rd{y} \Big) = \frac{\pi c_V}{8 a_2^{7/2}} E^{3/2} + O(E^2)
\end{split}\end{equation}
and \eqref{lem_Texpan_1} is proved.

Assume $c_V\ne 0$. Theorem \ref{thm_ana} shows that $T'(E)$ is real-analytic on $(0,\infty)$, and \eqref{lem_Texpan_1} shows that $T'(E)$ is nonzero near $E=0$. Therefore $T'(E)$ is not identically zero on any sub-interval of $(0,\infty)$, i.e., $T(E)$ is non-constant on any sub-interval of $(0,\infty)$.
\end{proof}

\begin{corollary}\label{cor_Vd}
Let $d\ge 2$ be an integer, and 
\begin{equation}\label{Vd}
V(x) = \frac{x^{2-d}-1}{d-2} + \frac{x^2-1}{2}
\end{equation}
where the first term is replaced by $-\ln x$ for $d=2$. Then the corresponding period $T(E)$ is non-constant on any sub-interval of $(0,\infty)$ if and only if $d\ne 4$. Furthermore, $T(E)$ is constant if $d=4$.
\end{corollary}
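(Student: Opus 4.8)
\textbf{Proof proposal for Corollary \ref{cor_Vd}.}

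The plan is to reduce the statement to a computation of the constant $c_V$ from Lemma \ref{lem_Texpan} applied to the potential $V$ in \eqref{Vd}, together with a separate direct argument for the $d=4$ case. First I would verify that $V$ in \eqref{Vd} satisfies the hypotheses \textbf{(V1)}--\textbf{(V3)} on the interval $(0,\infty)$: smoothness and real analyticity are clear since $V$ is built from $x^{2-d}$ (or $-\ln x$) and $x^2$; the limits $V(0^+)=V(\infty)=\infty$ follow from the growth of the two terms; and $V'(x) = -x^{1-d}+x = x^{1-d}(x^d-1)$ shows the unique critical point is at $X_0=1$ with $V(1)=V'(1)=0$, $V'<0$ on $(0,1)$ and $V'>0$ on $(1,\infty)$. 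Finally $V''(x) = (d-1)x^{-d}+1$, so $V''(1)=d>0$, giving \textbf{(V2)}. Note $V$ has been normalized precisely so that $X_0=1$ and $V(X_0)=0$, matching the setup of Section \ref{sec_ana}.

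Next I would compute the higher derivatives at $X_0=1$. From $V'(x)=-x^{1-d}+x$ one gets $V''(x)=(d-1)x^{-d}+1$, $V'''(x)=-d(d-1)x^{-d-1}$, $V''''(x)=d(d-1)(d+1)x^{-d-2}$, hence
\begin{equation}
V''(1)=d,\quad V'''(1)=-d(d-1),\quad V''''(1)=d(d-1)(d+1).
\end{equation}
Plugging into the definition of $c_V$ in Lemma \ref{lem_Texpan},
\begin{equation}
c_V = -\frac{1}{4}V''(1)V''''(1) + \frac{5}{12}(V'''(1))^2 = -\frac{1}{4}d^2(d-1)(d+1) + \frac{5}{12}d^2(d-1)^2.
\end{equation}
Factoring out $\frac{1}{12}d^2(d-1)$ gives $c_V = \frac{1}{12}d^2(d-1)\big(-3(d+1)+5(d-1)\big) = \frac{1}{12}d^2(d-1)(2d-8) = \frac{1}{6}d^2(d-1)(d-4)$. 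Thus for integer $d\ge 2$, $c_V\ne 0$ precisely when $d\ne 4$ (the factors $d^2$ and $d-1$ are nonzero for $d\ge 2$). By the final assertion of Lemma \ref{lem_Texpan}, $c_V\ne 0$ forces $T(E)$ to be non-constant on every sub-interval of $(0,\infty)$, which proves the "if" direction and, taken together with the remaining case, the "only if" direction.

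It remains to show that $T(E)$ is genuinely constant when $d=4$; here $c_V=0$ only tells us the leading correction vanishes, not that $T$ is constant, so a different argument is needed. For $d=4$ the potential is $V(x) = \frac{x^{-2}-1}{2} + \frac{x^2-1}{2} = \frac{1}{2}(x^2 + x^{-2}) - 1$. The cleanest route is to substitute $q = x^2$ (or to exploit the $x \mapsto 1/x$ symmetry of $V$): writing the period integral \eqref{TE} and changing variables should collapse it to the period of a harmonic-type oscillator. Concretely, with $s = x - x^{-1}$ one has $V(x) = \frac{1}{2}s^2$ and the motion in the variable $s$ is exactly harmonic, so every orbit has the same period; one checks that $x$ ranges over a full orbit as $s$ ranges over its, and the change of variables is a diffeomorphism on each branch. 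This reveals $T(E)\equiv \tau_4 = 2\pi$ (consistent with \eqref{Tmin} and $V''(1)=4$, giving $2\pi/\sqrt{4}=\pi$ for the half-period, i.e. $T=2\pi$). I expect this $d=4$ computation to be the main obstacle: it requires finding the right change of variables that linearizes the dynamics, and then carefully verifying that it is a bijection between the orbit of $(x,v)$ and that of the harmonic oscillator so that the periods genuinely coincide. Everything else is the routine derivative bookkeeping sketched above.
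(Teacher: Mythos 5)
Your treatment of the case $d\ne 4$ is correct and is exactly the paper's argument: verify \textbf{(V1)}--\textbf{(V3)} with $X_0=1$, compute $V''(1)=d$, $V'''(1)=-d(d-1)$, $V''''(1)=d(d-1)(d+1)$, obtain $c_V=\frac{1}{6}d^2(d-1)(d-4)$, and invoke Lemma \ref{lem_Texpan}. The genuine gap is in the $d=4$ step. With $s=x-x^{-1}$ you do have $V(x)=\frac{1}{2}s^2$, but the motion of $s$ is \emph{not} harmonic: since $\dot{s}=(1+x^{-2})v$ and $\dot{v}=-V'(x)=-(x-x^{-3})=-(1+x^{-2})s$, the point $(s,v)$ traverses the circle $s^2+v^2=2E$ with the \emph{non-constant} angular speed $1+x(t)^{-2}$. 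A time-reparametrized rotation gives no direct conclusion about the period, so "every orbit has the same period" does not follow from this substitution as you stated it. The incorrectness is visible in your own consistency check: you conclude $T\equiv 2\pi$, whereas \eqref{Tmin} gives $2\pi\,(V''(X_0))^{-1/2}=2\pi/\sqrt{4}=\pi$ as the \emph{full} period of the linearized oscillation, i.e.\ the limit of $T(E)$ as $E\to 0^+$ (not a half-period); the constant value really is $\pi$, so $T\equiv 2\pi$ would contradict continuity of $T$ at the bottom of the well.

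The step can be repaired, and each repair makes contact with the paper's actual proof, which differentiates the period via \eqref{dTE} and uses the symmetry $V(x)=V(1/x)$ (hence $x_2=x_1^{-1}$) to get $H(x_2)-H(x_1)=0$ and so $T'\equiv 0$. Alternative (i): use $q=x^2$ instead, which genuinely linearizes the dynamics, since $\ddot{q}=2v^2-2xV'(x)=4(E+1)-4q$ along the energy-$E$ orbit; this is a shifted harmonic oscillator of period $\pi$, and $x\mapsto x^2$ is a diffeomorphism of $(0,\infty)$, so the $x$-orbit has the same period $\pi$ for every $E$. Alternative (ii): keep $s=x-x^{-1}$ but insert it into the period integral \eqref{TE} rather than into the dynamics: from $x=\frac{1}{2}\bigl(s+\sqrt{s^2+4}\bigr)$ one gets $\rd{x}=\bigl(\frac{1}{2}+\frac{s}{2\sqrt{s^2+4}}\bigr)\rd{s}$, the odd part integrates to zero against the even weight $(2E-s^2)^{-1/2}$, and $\frac{1}{2}T(E)=\frac{1}{2}\int_{-\sqrt{2E}}^{\sqrt{2E}}(2E-s^2)^{-1/2}\rd{s}=\frac{\pi}{2}$, i.e.\ $T\equiv\pi$ — this cancellation of the odd part is precisely the $x\mapsto 1/x$ symmetry the paper exploits in \eqref{dTE}. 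As written, however, your $d=4$ argument both rests on a false claim (harmonicity of $s$) and produces the wrong constant, so this direction of the corollary is not established.
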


By rescaling arguments and calculating \eqref{Tmin}, one can easily deduce that for $d=4$, $T(\cE_0;1)$ takes the constant value $\pi$, as observed in Figure \ref{fig0}.

\begin{proof}
It is clear that {\bf (V1)}-{\bf (V3)} are satisfied with $(X_1,X_0,X_2)=(0,1,\infty)$. Then we compute
\begin{equation}
V''(x) = (d-1)x^{-d} + 1,\quad V'''(x) = -d(d-1)x^{-d-1},\quad V''''(x) = (d+1)d(d-1)x^{-d-2}\,.
\end{equation}
Therefore
\begin{equation}
c_V = -\frac{1}{4}d^2(d+1)(d-1) + \frac{5}{12}d^2(d-1)^2 = \frac{1}{12}d^2(d-1)(-3d-3+5d-5)= \frac{1}{6}d^2(d-1)(d-4)
\end{equation}
which is nonzero for any integer $d\ge 2,\,d\ne 4$. Therefore we get the conclusion for $d\ne 4$ from Lemma \ref{lem_Texpan}.

For $d=4$, we have $V(x) = \frac{1}{2x^2}+\frac{x^2}{2}-1$. We follow the notation in \eqref{dTE}. Since $x_1,x_2$ are determined by $V(x_1)=V(x_2)=y$, we have $x_2=x_1^{-1}$. Then notice that $V'(x) = -\frac{1}{x^3}+x$, $V''(x) = \frac{3}{x^4}+1$. Then explicit calculation (substituting $x_2=x_1^{-1}$) shows that
\begin{equation}\begin{split}
H(x_2)-H(x_1)= -\frac{(V'(x_1))^2-2V(x_1)V''(x_1)}{(V'(x_1))^3}+ \frac{(V'(x_2))^2-2V(x_2)V''(x_2)}{(V'(x_2))^3} =0\,.
\end{split}\end{equation}
Therefore $T(E)$ is constant due to \eqref{dTE}.
\end{proof}

\begin{remark}
It is well-known that $T(E)$ is constant for \eqref{Vd} with $d=1$ because $V$ is a sort of translated harmonic oscillator. This is consistent with the well-known theory of 1D Euler-Poisson \cite[Theorem 3.1]{engelberg2001critical}. However, we are not aware of a previous result which noticed the special property of the case $d=4$.
\end{remark}

\section*{Acknowledgements}
JAC and RS were supported by the Advanced Grant Nonlocal-CPD (Nonlocal PDEs for Complex Particle Dynamics: Phase Transitions, Patterns and Synchronization) of the European Research Council Executive Agency (ERC) under the European Union's Horizon 2020 research and innovation programme (grant agreement No. 883363). JAC was also partially supported by the EPSRC grant numbers EP/T022132/1 and EP/V051121/1.

\bibliographystyle{abbrv}
\bibliography{biblio}

\end{document}